\newcommand{\Blue}[1]{\textcolor{blue}{#1}}
\newcommand{\Red}[1]{\textcolor{red}{#1}}
\newtheorem{theorem}{Theorem}[section]
\newtheorem{lemma}[theorem]{Lemma}
\newtheorem{proposition}[theorem]{Proposition}
\newtheorem{corollary}[theorem]{Corollary}
\theoremstyle{definition}
\newtheorem{definition}[theorem]{Definition}
\newtheorem{example}[theorem]{Example}
\theoremstyle{remark}
\newtheorem{remark}[theorem]{Remark}
\numberwithin{equation}{section}
\renewcommand{\P}{\mathcal{P}}
\renewcommand{\L}{\mathcal{L}}
\newcommand{\precdot}{\mathrel{\ooalign{$\prec$\cr\hidewidth\hbox{$\cdot$}\cr}}}
\newcommand{\A}{\mathscr{A}}
\newcommand{\Des}{\mathrm{Des}}   
\newcommand{\des}{\mathrm{des}}
\newcommand{\red}{{\mathrm{rdes}}}
\newcommand{\rhomax}[1]{{\overline{\rho}^{#1}}}   
\newcommand{\flatten}{\mathrm{flat}}
\newcommand{\comp}{\mathrm{comp}}
\newcommand{\fund}{\mathfrak{F}}
\begin{document}


\title[Flagged $(\P,\rho)$-partitions]{Flagged $(\P,\rho)$-partitions}

\author[S. Assaf]{Sami Assaf}
\address{Department of Mathematics, University of Southern California, 3620 S. Vermont Ave, Los Angeles, CA 90089-2532, U.S.A.}
\email{shassaf@usc.edu}
\thanks{S.A. supported in part by NSF DMS-1763336.}

\author[N. Bergeron]{Nantel Bergeron}
\address{Department of Mathematics and Statistics, York University, 4700 Keele St, Toronto, Ontario M3J 1P3, Canada}
\email{bergeron@yorku.ca}
\thanks{N.B. supported in part by York Research Chair in Applied Algebra and NSERC.}




\keywords{}

\begin{abstract}
  We introduce the theory of $(\P,\rho)$-partitions, depending on a poset $\P$ and a map $\rho$ from $\P$ to positive integers. The generating function $\fund_{\P,\rho}$ of $(\P,\rho)$-partitions is a polynomial that, when the images of $\rho$ tend to infinity, tends to Stanley's generating function of $\P$-partitions. Analogous to Stanley's fundamental theorem for $\P$-partitions, we show that the set of $(\P,\rho)$-partitions decomposes as a disjoint union of $(\L,\rho)$-partitions where $\L$ runs over the set of linear extensions of $\P$. In this more general context, the set of all $\fund_{\L,\rho}$ for linear orders $\L$ over determines a basis of polynomials. We thus introduce the notion of flagged $(\P,\rho)$-partitions, and we prove that the set of all $\fund_{\L,\rho}$ for flagged $(\L,\rho)$-partitions for linear orders $\L$ is precisely the fundamental slide basis of the polynomial ring, introduced by the first author and Searles. Our main theorem shows that any generating function $\fund_{\P,\rho}$ of flagged $(\P,\rho)$-partitions is a positive integer linear combination of slide polynomials. As applications, we give a new proof of positivity of the slide product and, motivating our nomenclature, we also prove flagged Schur functions are slide positive.
\end{abstract}

\maketitle
\tableofcontents

%
\section{Introduction}
%
\label{sec:intro}

The theory of $\P$-partitions has its origins with MacMahon \cite{Mac60} and was developed in depth by Stanley \cite{Sta72}. There have been several applications of this theory. Notably, the quasisymmetric functions developed by Gessel \cite{Ges84} originated from the study of $\P$-partitions. Symmetric functions are a special case of quasisymmetric functions as can be seen by realizing Schur functions as certain $\P$-partition generating functions. 

More recently, Assaf and Searles~\cite{AS17} introduced slide polynomials to study Schubert polynomials \cite{LS82}. The slide polynomials have several interesting properties related to Schubert polynomials and certain limits related to quasisymmetric functions. Moreover, Assaf and Searles show that the product of slide polynomials expands positively in term of slide polynomials. We see in our present work that slide polynomials arise naturally in a restricted version of $\P$-partitions, and, moreover, the positivity of the slide product naturally follows.

A \emph{poset} $\P$ of order $p$ is a partial order on the set $[p] = \{1,2,\ldots,p\}$.  We have several orders that come into play: $\prec$ denotes the partial order on $\P$, $\precdot$ denotes a cover relation in $\P$, and $<$ denotes the natural order of the integers. Given $\rho\colon [p]\to {\mathbb Z}$ any map, a function $f\colon \P\to{\mathbb N}$ is a \emph{$(\P,\rho)$-partition} if 
\begin{enumerate}
\item if $i \prec j$, then $f(i) \le f(j)$, 
\item if $i \prec j$ and $i > j$, then $f(i) < f(j)$, and 
\item $f(i)\le \rho(i)$.
\end{enumerate}
Here the first two conditions alone characterize classical $\P$-partitions \cite{Sta72}.
  
The generating function $\fund_{\P,\rho}$ of $(\P,\rho)$-partitions is the polynomial
$$ \fund_{\P,\rho} = \sum_{f\in \A_\rho(\P)}  x_{f(1)} \cdots x_{f(p)} $$
where $\A_\rho(\P)$ denotes the set of all $(\P,\rho)$-partitions. In Section~\ref{sec:FTRPP} we show that we have a decomposition
\begin{equation}
  \A_{\rho}(\P) = \bigsqcup_{\L \in \mathscr{L}(\P)} \A_{\rho}(\L),
\end{equation}
where the disjoint union is over the set $\mathscr{L}(\P)$ of linear extensions of $\P$. This implies that $\fund_{\P,\rho}$ is a positive linear combination of $ \fund_{\L,\rho} $ where $\L$ are linear orders. Unfortunately, the set $\{ \fund_{\L,\rho} \}$ as $\L$ runs over all linear orders on $[p]$ and $\rho$ is any restriction does not form a basis of polynomials. This leads us to restrict further the restriction maps $\rho$ that we use.
  
We say that a restriction map $\rho$ is a \emph{$\P$-flag} if $\rho(i)>0$ for all $i\in[p]$, and
  \begin{enumerate}
  \item[(i)] if $i \precdot j$ and $i<j$, then $\rho(i) = \rho(j)$, and
  \item[(ii)] if $i \precdot j$ and $i > j$, then $\rho(i) \le \rho(j)$.
    \end{enumerate}
  When $\rho$ is a \emph{$\P$-flag}, we say that $\A_{\rho}(\P)$ is the set of \emph{flagged $(\P,\rho)$-partitions}. In Section~\ref{sec:linear}, we show that the set $\{ \fund_{\L,\rho} \}$ as $\L$ runs over all linear orders on $[p]$ and $\rho$  over all $\L$-flags is exactly the set of fundamental slide polynomials as defined in~\cite{AS17}. In particular this set is a basis for polynomials. The main theorem of this paper, proved in Section~\ref{sec:mainproof}, states:
  
\begin{theorem} \label{thm:main}
For any $\P$ on $[p]$ and $\P$-flag $\rho$, the polynomials $\fund_{\P,\rho}$ expand positively in the basis of slide polynomials.
\end{theorem}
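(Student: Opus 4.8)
The plan is to combine the two structural facts already on the table. First, by the decomposition $\A_{\rho}(\P) = \bigsqcup_{\L \in \mathscr{L}(\P)} \A_{\rho}(\L)$ from Section~\ref{sec:FTRPP}, we get immediately
\[
  \fund_{\P,\rho} = \sum_{\L \in \mathscr{L}(\P)} \fund_{\L,\rho},
\]
a manifestly positive integer sum of the linear-order generating functions. Second, by the result of Section~\ref{sec:linear}, when $\rho$ is an $\L$-flag, $\fund_{\L,\rho}$ is exactly a fundamental slide polynomial, which in turn expands positively in the slide basis (indeed a fundamental slide polynomial is itself a nonnegative, in fact $\{0,1\}$, sum of slide polynomials, by~\cite{AS17}). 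So the whole theorem reduces to a single gap: knowing that $\rho$ is a $\P$-flag, I must show that $\rho$ is an $\L$-flag for \emph{every} linear extension $\L \in \mathscr{L}(\P)$, so that each term $\fund_{\L,\rho}$ in the sum above is a genuine fundamental slide polynomial rather than an arbitrary $(\L,\rho)$-generating function.

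So the heart of the argument is the implication: if $\rho$ is a $\P$-flag and $\L$ is a linear extension of $\P$, then $\rho$ is an $\L$-flag. Here the subtlety is that cover relations are not preserved under taking linear extensions: a cover $i \precdot_{\L} j$ in $\L$ need not be a cover in $\P$, only a relation $i \preceq_{\P} j$; conversely covers of $\P$ may cease to be covers in $\L$. I would argue as follows. Fix a cover $i \precdot_{\L} j$ in the linear order $\L$. Since $\L$ extends $\P$, we have $i \preceq_{\P} j$, so there is a saturated chain $i = z_0 \precdot_{\P} z_1 \precdot_{\P} \cdots \precdot_{\P} z_k = j$ in $\P$; because $\L$ extends $\P$ and $i \precdot_{\L} j$ is a cover in the \emph{total} order $\L$, every $z_\ell$ must equal $i$ or $j$, so in fact $i \precdot_{\P} j$ is already a cover of $\P$. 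Thus covers of $\L$ are a subset of covers of $\P$, and conditions (i), (ii) defining an $\L$-flag for $\rho$ follow directly from the corresponding conditions for the $\P$-flag $\rho$, using that the integer order $<$ is the same in both settings and $\rho(i) > 0$ is a hypothesis on $\rho$ alone.

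Wait — I should double-check the claim that every element of a saturated $\P$-chain between $i$ and $j$ lies in $\{i,j\}$ when $i \precdot_{\L} j$. If $i \prec_{\P} z \prec_{\P} j$ for some $z \notin \{i,j\}$, then since $\L$ extends $\P$ we have $i \prec_{\L} z \prec_{\L} j$, contradicting that $i \precdot_{\L} j$ is a cover in $\L$. Hence no such $z$ exists, so either $i \precdot_{\P} j$ or $i$ and $j$ are $\P$-incomparable — and in the incomparable case there is nothing to check, since conditions (i) and (ii) only constrain $\rho$ on $\P$-related pairs, while $\L$-flag conditions (i), (ii) are vacuous there as well... actually no: the $\L$-flag conditions are about $\L$-covers, which do occur between $\P$-incomparable elements. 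So in that case I need $\rho$ to still satisfy the flag inequalities. This is the one genuinely delicate point, and resolving it is where I expect the real work to be: I anticipate needing an auxiliary lemma characterizing, for a $\P$-flag $\rho$, the value $\rho$ takes on each element purely in terms of the structure of $\P$ (for instance, $\rho$ is forced to be constant on the connected components of the "ascending-cover" graph, and weakly decreasing along descending covers), and then checking that any $\L$-cover between $\P$-incomparable $i,j$ still respects the required inequality because of how these forced values interact. Once that lemma is in place, the main theorem follows by assembling the three displayed facts. The expected main obstacle is therefore purely this combinatorial stability statement — "$\P$-flag $\Rightarrow$ $\L$-flag for all linear extensions $\L$" — and in particular controlling the behavior of $\rho$ across pairs that are incomparable in $\P$ but comparable (indeed adjacent) in $\L$.
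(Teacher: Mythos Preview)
Your plan correctly isolates the crux: after the Fundamental Theorem gives $\fund_{\P,\rho} = \sum_{\L \in \mathscr{L}(\P)} \fund_{\L,\rho}$, one must argue that each summand is a single slide polynomial (or zero). You also correctly flag the delicate case of an $\L$-cover $i \precdot_{\L} j$ with $i$ and $j$ incomparable in $\P$. Unfortunately, the route you propose to close this gap---proving that a $\P$-flag $\rho$ is automatically an $\L$-flag for every linear extension $\L$---is not just hard, it is false, and the failure is fatal rather than reparable.

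Take $\P = \{1,2\}$ with no relations and $\rho(1)=3$, $\rho(2)=5$. Conditions (i) and (ii) of Definition~\ref{def:P-flag} are vacuous, so $\rho$ is a $\P$-flag. For the linear extension $\L' : 1 \prec 2$ we have $\rhomax{\L'} = (3,5)$, which is \emph{not} constant on the ascending cover $1 \precdot 2$, so Proposition~\ref{prop:slide} does not apply. Worse, a direct computation gives
\[
  \fund_{(\L',\rho)} \;=\; \fund_{(0,0,1,0,1)} + \fund_{(0,0,2,0,0)} - \fund_{(0,1,1,0,0)},
\]
with a genuinely negative coefficient. So it is not merely that $\rho$ fails to be an $\L'$-flag; the individual summand $\fund_{(\L',\rho)}$ is \emph{not} slide-positive. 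The positivity of $\fund_{\P,\rho}$ therefore cannot be read off term-by-term from the naive decomposition over $\mathscr{L}(\P)$: cancellation between different linear extensions is required. (Compare Example~\ref{ex:linnonbasis} and the remark after Lemma~\ref{lem:flag_decomposition}.)

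The paper circumvents this by inserting a relabelling step. It introduces the auxiliary notion of a \emph{well-labelled} pair $(\P,\rho)$ (Definition~\ref{def:well-label}), which adds to the flag conditions the constraint $\rho(i)\ge\rho(j)$ for every \emph{incomparable} pair $i<j$---precisely the control on incomparable pairs you sensed was missing. Proposition~\ref{prop:well} shows that any flagged $(\P,\rho)$ can be converted, by a permutation of labels, to a well-labelled pair with the same generating polynomial. Lemma~\ref{lem:flag_decomposition} then shows well-labelledness is preserved each time one adds a relation between an incomparable pair, so every linear extension of the relabelled poset is well-labelled; and Lemma~\ref{lem:well-labelledL} shows that for a well-labelled linear order, $\rhomax{\L}$ is constant on ascending runs, hence $\fund_{(\L,\rho)}$ is a single slide polynomial or zero. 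The relabelling is not cosmetic: it is what makes the term-by-term positivity argument go through.
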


We note that if $\rho$ is not a $\P$-flag, then theorem above is false. Even for linear orders $\L$, if $\rho$ is not a $\L$-flag, then $\fund_{\L,\rho}$ will in general have negative coefficients. 

There are several consequences of our main theorem; we mention two. First, an immediate application of the theorem is a new proof that the product of two slide polynomials is a positive linear combination of slide polynomials. Second, the theorem shows that the flagged Schur functions of Lascoux and Sch\"{u}tzenberger~\cite{LS82,wachs82} expand positively in term of slides polynomials. The latter motivates for the term \emph{flag} used for the subset of $(\P,\rho)$ partitions that we study. 

%
\section{Stanley's theory of $\P$-partitions}
%
\label{sec:stanley}

Before delving into the details of this new theory of $(\P,\rho)$-partitions, we begin with a brief review of Stanley's theory of $\P$-partitions and its consequences. For a comprehensive survey of $\P$-partitions, see \cite{Ges16}. 

\begin{definition}[\cite{Sta72}]\label{def:P-partition}
  Given a poset $\P$ on $[p]$, a \emph{$\P$-partition} is a map $f : \P \rightarrow \mathbb{N}$ such that for all $i,j\in\P$ we have
  \begin{enumerate}
  \item if $i \prec j$, then $f(i) \le f(j)$, and
  \item if $i \prec j$ and $i > j$, then $f(i) < f(j)$.
  \end{enumerate}
  If $\sum_{i \in \P} f(i) = n$, then $f$ is a \emph{$\P$-partition of $n$}.
\end{definition}


For example, the poset with Hasse diagram shown in Fig.~\ref{fig:poset} has edges indicating relations between $f(i)$ and $f(j)$ that any $\P$-partition $f$ must satisfy.

\begin{figure}[ht]
  \begin{center}
    \begin{tikzpicture}[scale=1,
        roundnode/.style={circle, draw=black, thick, minimum size=1ex},
        label/.style={%
          postaction={ decorate,transform shape,
            decoration={ markings, mark=at position .5 with \node #1;}}}]
      \node[roundnode] at (0,3)   (P5) {$5$};
      \node[roundnode] at (1,1.5) (P1) {$1$};
      \node[roundnode] at (2,0)   (P7) {$7$};
      \node[roundnode] at (2,3)   (P2) {$2$};
      \node[roundnode] at (3,1)   (P4) {$4$};
      \node[roundnode] at (3,2)   (P6) {$6$};
      \node[roundnode] at (4,0)   (P3) {$3$};
      \draw[thin,label={[below]{\Blue{$\scriptstyle \ge$}}}] (P5) -- (P1) ;
      \draw[thin,label={[above]{\Blue{$\scriptstyle \le$}}}] (P1) -- (P2) ;
      \draw[thin,label={[above]{\Blue{$\scriptstyle \ge$}}}] (P4) -- (P3) ;
      \draw[thin,label={[above]{\Blue{$\scriptstyle \ge$}}}] (P6) -- (P4) ;
      \draw[thin,label={[above]{\Blue{$\scriptstyle >$}}}] (P2) -- (P6) ;
      \draw[thin,label={[below]{\Blue{$\scriptstyle >$}}}] (P1) -- (P7) ;
      \draw[thin,label={[above]{\Blue{$\scriptstyle >$}}}] (P4) -- (P7) ;
    \end{tikzpicture}
  \end{center}
  \caption{\label{fig:poset}An example of a labeled poset of order $7$, with edges decorated by the conditions on a map to be a $\P$-partition.}
\end{figure}

Let $\A(\P)$ denote the set of $\P$-partitions. Then we may define the generating function of a poset $\P$ by
\begin{equation}\label{e:poset-gf}
  F_{\P} = \sum_{f \in \A(\P)} x_{f(1)} \cdots x_{f(p)} .
\end{equation}

\subsection{Linear $\P$-partitions}
\label{sec:stanley-linear}

As we will see below in the Fundamental Theorem of $\P$-partitions, linear orders are of particular interest. 

Given a linear order $\L$, we say that $i \precdot j$ is a \emph{descent} whenever $i>j$. A strong composition is a finite sequence of positive integers, and we use letters $\alpha, \beta, \gamma$ for strong compositions. We can record descents of $\L$ with the descent composition defined as follows.

\begin{definition}\label{def:strong-descent}
  For $\L$ a linear order on $[p]$, the \emph{descent composition of $\L$}, denoted by $\Des(\L)$, is formed by removing the edge in the Hasse diagram between $i$ and $j$ whenever a cover $i \precdot j$ is such that $i>j$, calling the resulting chains $C_1,\ldots,C_r$ taken in ascending order in $\L$, and setting $\Des(\L)_{s} = |C_s|$. We have that $\Des(\L)$ is a strong composition of $p$.
\end{definition}

\begin{example}\label{ex:strong-descent}
  Let $\L$ be the linear order $2 \prec 5 \prec 1 \prec 4 \prec 6 \prec 7 \prec 3 \prec 8 \prec 9$. Then the chains after removing edges for descents become
  \[ \overbrace{2 \prec 5}^{C_1} \hspace{2em} \overbrace{1 \prec 4 \prec 6 \prec 7}^{C_2} \hspace{2em} \overbrace{3 \prec 8 \prec 9}^{C_3} \ . \]
  Therefore $\Des(\L) = (|C_1|,|C_2|,|C_3|) = (2,4,3)$.
\end{example}

Gessel \cite{Ges84} observed that for $\L$ a linear order, the function $F_{\L}$ depends only on $\Des(\L)$. This led him to introduce the \emph{fundamental quasisymmetric functions} \cite{Ges84}, indexed by strong compositions, that form an important basis for quasisymmetric functions. 

Given strong compositions $\alpha,\beta$, we say that \emph{$\beta$ refines $\alpha$} if for all $j$ there exist indices $i_1<\ldots<i_k$ such that
\begin{displaymath}
  \beta_1 + \cdots + \beta_{i_j} = \alpha_1 + \cdots + \alpha_j.
\end{displaymath}
For example, $(1,2,2)$ refines $(3,2)$ since $1+2 = 3$ and $1+2+2 = 3+2$. However, $(1,2,2)$ does not refine $(2,3)$ since $1 < 2$ and $1 + 2 > 2$.

A weak composition is a finite sequence of non-negative integers, and we use letters $a, b, c$ for weak compositions. The flattening of a weak composition is the strong composition obtained by removing the zeros. For example, $\flatten(0,3,0,2) = (3,2)$.
We will also use  the \emph{dominance order} on weak compositions defined by $b \trianglerighteq a$ if and only if $b_1 + \cdots + b_k \geq a_1 + \cdots + a_k$ for all $k$. Note that this is a partial order, and the linear order given by reverse lexicographic order extends dominance order.

\begin{definition}[\cite{Ges84}]
  For $\alpha$ a strong composition, the \emph{fundamental quasisymmetric function} $F_{\alpha}$ is given by
  \begin{equation}
    F_{\alpha}(X) = \sum_{\flatten(b) \ \mathrm{refines} \ \alpha} x_1^{b_1} x_2^{b_2} \cdots ,
    \label{e:F_n}
  \end{equation}
  where the sum is over weak compositions $b$ whose flattening refines $\alpha$.
\end{definition}
  
For example, restricting to three variables to make the expansion finite, we have
\begin{displaymath}
  F_{(3,2)}(x_1,x_2,x_3) = x_2^3 x_3^2 + x_1^3 x_3^2 + x_1^3 x_2^2 + x_1^3 x_2 x_3 + x_1 x_2^2 x_3^2 + x_1^2 x_2 x_3^2.
\end{displaymath}

\begin{proposition}[\cite{Ges84}]\label{prop:gessel}
  For $\L$ a linear order on $[p]$, we have
  \begin{equation}
    F_{\L} = F_{\Des(\L)},
  \end{equation}
  where $F_{\alpha}$ denotes the fundamental basis for quasisymmetric functions.
\end{proposition}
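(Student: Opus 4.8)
The plan is to show directly that the set of $\P$-partitions of a linear order $\L$ is in bijection with those weak compositions whose flattening refines $\Des(\L)$, after which Proposition~\ref{prop:gessel} is just an unpacking of definitions. First I would fix a linear order $\L$ on $[p]$, say $\ell_1 \prec \ell_2 \prec \cdots \prec \ell_p$, and observe that a map $f\colon\L\to\mathbb{N}$ is an $\L$-partition precisely when $f(\ell_1)\le f(\ell_2)\le\cdots\le f(\ell_p)$ with the inequality strict at position $s$ exactly when $\ell_s \succdot \ell_{s+1}$ fails to be ascending, i.e.\ when $\ell_s > \ell_{s+1}$ — that is, exactly at the descents recorded by $\Des(\L)$. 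So the data of an $\L$-partition is a weakly increasing sequence of nonnegative integers with prescribed strict jumps.

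Next I would translate such a sequence into a weak composition $b$ by letting $b_k$ be the number of indices $s$ with $f(\ell_s) = k$; thus $b$ records the multiplicities of the values taken by $f$. The monomial $x_{f(1)}\cdots x_{f(p)}$ contributed by $f$ is then exactly $x_1^{b_1}x_2^{b_2}\cdots$. The key step is to check that $f$ is an $\L$-partition if and only if $\flatten(b)$ refines $\Des(\L)$: the blocks of equal values of $f$ are precisely the maximal runs where no strict inequality is forced, so the composition of block sizes of $f$ refines $\Des(\L)$, and removing zeros from $b$ (the unused values) yields exactly this block-size composition. Conversely, any $b$ with $\flatten(b)$ refining $\Des(\L)$ assembles uniquely into a valid weakly increasing sequence with the required strict jumps. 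Summing over all $\L$-partitions therefore gives $F_{\L} = \sum_{\flatten(b)\ \mathrm{refines}\ \Des(\L)} x_1^{b_1}x_2^{b_2}\cdots = F_{\Des(\L)}$.

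The main obstacle — really the only subtle point — is getting the refinement condition exactly right at the boundaries between blocks: one must verify that a forced strict inequality between consecutive positions $\ell_s, \ell_{s+1}$ corresponds to the \emph{end} of one part of $\Des(\L)$, and that a block of equal $f$-values can straddle an ascent but never a descent, so that the block decomposition refines rather than merely interleaves with $\Des(\L)$. Once the descent positions of $\L$ are correctly identified with the partial-sum markers $\alpha_1+\cdots+\alpha_j$ of $\alpha=\Des(\L)$, the bijection and the equality of generating functions are immediate. I would present this as a short lemma describing $\L$-partitions as monotone sequences with prescribed strict steps, followed by the two-line counting argument.
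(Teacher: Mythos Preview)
Your argument is correct and is essentially the standard proof of this classical fact. Note, however, that the paper does not actually supply a proof of Proposition~\ref{prop:gessel}: it is stated with a citation to \cite{Ges84} and no argument is given. So there is no ``paper's own proof'' to compare against; you have filled in what the paper leaves as a black box.

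Two small points. First, in your opening sentence you write ``$\ell_s \succdot \ell_{s+1}$ fails to be ascending'', but in the linear order $\L$ the cover relation is $\ell_s \precdot \ell_{s+1}$, not the reverse; your subsequent clarification ``i.e.\ when $\ell_s > \ell_{s+1}$'' shows you have the right condition in mind, so this is only a notational slip. Second, the map from $\L$-partitions $f$ to weak compositions $b$ is exactly what the paper later calls $\comp(f)$ (introduced just before Proposition~\ref{prop:max-rho}); using that notation would tie your proof into the surrounding text. Otherwise the bijection you describe between $\A(\L)$ and $\{b : \flatten(b) \text{ refines } \Des(\L)\}$ is correct and the identification of monomials is immediate.
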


\subsection{Fundamental Theorem for $\P$-partitions}
\label{sec:stanley-fundamental}

Given a poset $\P$, a \emph{linear extension of $\P$} is a linear order on $[p]$ that extends $\P$. For example, the linear order $7 \prec 1 \prec 3 \prec 5 \prec 4 \prec 6 \prec 2$ is a linear extension of the poset in Fig.~\ref{fig:poset}.

Stanley proved the following result that has myriad consequences for $\P$-partitions and their generating functions.

\begin{theorem}[Fundamental Theorem of $\P$-partitions \cite{Sta72}]\label{thm:fundamental}
  Given a poset $\P$ on $[p]$, we have
  \begin{equation}\label{e:A-fund}
    \A(\P) = \bigsqcup_{\L \in \mathscr{L}(\P)} \A(\L),
  \end{equation}
  where the disjoint union is over the set $\mathscr{L}(\P)$ of linear extensions of $\P$. 
\end{theorem}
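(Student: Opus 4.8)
The plan is to exhibit, for each $\P$-partition $f\in\A(\P)$, a canonical linear extension $\L_f\in\mathscr{L}(\P)$ with $f\in\A(\L_f)$, and then to show that $\L_f$ is the \emph{unique} linear extension of $\P$ whose associated set of $\L$-partitions contains $f$. Combined with the easy inclusion $\A(\L)\subseteq\A(\P)$ for every $\L\in\mathscr{L}(\P)$, this yields the asserted disjoint union: the sets $\A(\L)$ cover $\A(\P)$ and are pairwise disjoint.

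First I would record the easy direction. If $\L$ extends $\P$ and $f\in\A(\L)$, then $f\in\A(\P)$: whenever $i\prec j$ in $\P$ we also have $i\prec j$ in $\L$, so $f(i)\le f(j)$ by condition (1) for $\L$; and if moreover $i>j$, then condition (2) for $\L$ gives $f(i)<f(j)$. Hence $\bigcup_{\L\in\mathscr{L}(\P)}\A(\L)\subseteq\A(\P)$.

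Next, given $f\in\A(\P)$, I would define a total order $\L_f$ on $[p]$ by declaring $a\prec_f b$ whenever $f(a)<f(b)$, or $f(a)=f(b)$ and $a<b$ as integers; equivalently, order $[p]$ by the lexicographic order on the pairs $\bigl(f(a),a\bigr)$, which are pairwise distinct. I claim $\L_f$ extends $\P$: if $a\prec b$ in $\P$ with $a<b$, then $f(a)\le f(b)$ by (1), so $a\prec_f b$; if $a\prec b$ in $\P$ with $a>b$, then $f(a)<f(b)$ by (2), so again $a\prec_f b$. Moreover $f\in\A(\L_f)$: condition (1) holds because $a\prec_f b$ forces $f(a)\le f(b)$ by construction; condition (2) holds because if in addition $a>b$ then $f(a)=f(b)$ is impossible (it would force $b\prec_f a$), hence $f(a)<f(b)$.

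Finally I would prove uniqueness. Suppose $f\in\A(\L)$ for some $\L\in\mathscr{L}(\P)$; I show $\L=\L_f$. Fix $a\neq b$ with $a\prec b$ in $\L$. By (1) for $\L$, $f(a)\le f(b)$. If $f(a)<f(b)$, then $a\prec_f b$. If $f(a)=f(b)$, then $a<b$ as integers, since $a>b$ would force $f(a)<f(b)$ by (2) for $\L$; so again $a\prec_f b$. Thus every relation of $\L$ is a relation of $\L_f$, and as both are total orders on $[p]$ we conclude $\L=\L_f$. Therefore each $f\in\A(\P)$ lies in $\A(\L)$ for exactly one linear extension $\L$, completing the proof. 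There is no serious obstacle here; the only delicate point is the bookkeeping of the tie-breaking rule (using the natural order of $[p]$ exactly when the $f$-values agree) so that the constructed order simultaneously extends $\P$ and respects the strict-descent condition (2) — getting the interplay of the weak and strict inequalities right is the crux.
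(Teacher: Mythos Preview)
Your proof is correct. The paper itself does not supply a proof of Theorem~\ref{thm:fundamental} (it is quoted from Stanley), but it does prove the restricted analogue, Theorem~\ref{lem:restricted-fundamental}, and the method there is genuinely different from yours. The paper argues by induction on the number of incomparable pairs: fixing an incomparable pair $i<j$, it forms the two extensions $\P_{i\prec j}$ and $\P_{i\succ j}$ and checks that $\A(\P)=\A(\P_{i\prec j})\sqcup\A(\P_{i\succ j})$, then recurses. Your argument is instead direct: you build for each $f$ the unique linear extension $\L_f$ by sorting $[p]$ lexicographically on $(f(a),a)$, and then verify both membership and uniqueness in one shot. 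Your approach has the virtue of giving an explicit inverse to the decomposition (one immediately knows which $\L$ receives a given $f$); the paper's inductive decomposition, on the other hand, is tailored to what comes later, since the step $\A_\rho(\P)=\A_\rho(\P_{i\prec j})\sqcup\A_\rho(\P_{i\succ j})$ is exactly what is reused in Lemma~\ref{lem:flag_decomposition} to propagate the well-labelled condition through the recursion.
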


In particular, this gives a simple decomposition of the generating function of a poset as the sum of generating functions of all linear extensions of the poset, the latter of which are elements of Gessel's fundamental basis.

\begin{corollary}\label{cor:fundamental}
  Given a poset $\P$ on $[p]$, we have
  \begin{equation}\label{e:F-fund}
    F_{\P} = \sum_{\L \in \mathscr{L}(\P)} F_{\L} = \sum_{\L \in \mathscr{L}(\P)} F_{\Des(\L)}.
  \end{equation}  
\end{corollary}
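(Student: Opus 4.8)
The plan is to deduce the corollary directly from the Fundamental Theorem of $\P$-partitions (Theorem~\ref{thm:fundamental}) together with Gessel's identification of the linear case (Proposition~\ref{prop:gessel}). First I would observe that the generating function $F_{\P}$ defined in \eqref{e:poset-gf} is obtained by summing the monomial $x_{f(1)}\cdots x_{f(p)}$ over $f \in \A(\P)$, and that this assignment of a monomial to a map is purely a function of the map, not of any poset structure. Hence any partition of the index set $\A(\P)$ into disjoint blocks induces the corresponding decomposition of the sum $F_{\P}$ into a sum of partial generating functions over those blocks.

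Next I would invoke Theorem~\ref{thm:fundamental}, which supplies exactly such a partition: $\A(\P) = \bigsqcup_{\L \in \mathscr{L}(\P)} \A(\L)$, with the union over linear extensions of $\P$. Summing the monomial $x_{f(1)}\cdots x_{f(p)}$ over each block $\A(\L)$ recovers, by definition, $F_{\L}$. Because the union is disjoint, no monomial is double-counted and none is omitted, so
\begin{displaymath}
  F_{\P} = \sum_{f\in\A(\P)} x_{f(1)}\cdots x_{f(p)} = \sum_{\L\in\mathscr{L}(\P)} \sum_{f\in\A(\L)} x_{f(1)}\cdots x_{f(p)} = \sum_{\L\in\mathscr{L}(\P)} F_{\L}.
\end{displaymath}
Finally, applying Proposition~\ref{prop:gessel} to each term, which gives $F_{\L} = F_{\Des(\L)}$, yields the second equality $F_{\P} = \sum_{\L\in\mathscr{L}(\P)} F_{\Des(\L)}$, completing the proof.

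There is essentially no obstacle here: the content is entirely carried by Theorem~\ref{thm:fundamental}, and the corollary is a formal consequence of passing from a disjoint-union statement about sets of maps to the associated generating-function identity. The only point requiring any care — and it is minor — is noting that the monomial weight $x_{f(1)}\cdots x_{f(p)}$ depends only on $f$ and is therefore compatible with regrouping the sum according to which linear extension $f$ belongs to; this is immediate from the definitions. (One could alternatively remark that an infinite sum of monomials is being manipulated, but since each variable appears with bounded total degree $p$ and the coefficients are nonnegative integers, the rearrangement is unconditionally valid.)
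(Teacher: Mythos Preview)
Your proposal is correct and matches the paper's approach: the corollary is stated without proof as an immediate consequence of Theorem~\ref{thm:fundamental}, with the second equality coming from Proposition~\ref{prop:gessel}, exactly as you argue.
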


For example, there are $18$ linear extensions of the poset in Fig.~\ref{fig:poset}, and so the corresponding generating function is a sum of $18$ terms of the fundamental basis.

An immediate and powerful consequence of Theorem~\ref{thm:fundamental} is a formula for the fundamental expansion of a product of elements of the fundamental basis.

\begin{corollary}\label{cor:shuffle}
  Let $\alpha, \beta$ be two strong compositions. Choose two linear orders $A$ and $B$ such that $\alpha=\Des(A)$ and $\beta=\Des(B)$. Then
  \begin{equation}\label{e:shuffle}
    F_{\alpha} F_{\beta} = F_{A} F_{B} = F_{A \oplus B} =\sum_{\L \in \mathscr{L}(A \oplus B)} F_{\Des(\L)},
  \end{equation}
  where $A \oplus B$ denotes the partial order given by the disjoint union of $A$ and $B$ and where no element of $A$ is comparable to an element of $B$. The set $\mathscr{L}(A \oplus B)$ corresponds to the shuffle product of $A$ and $B$.
\end{corollary}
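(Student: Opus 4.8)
The plan is to obtain the four-term identity by chaining together three links, two of which are already available. First, since $\Des(A)=\alpha$ and $\Des(B)=\beta$, Proposition~\ref{prop:gessel} gives $F_A=F_\alpha$ and $F_B=F_\beta$ (in particular the right-hand side does not depend on which representative linear orders $A,B$ are chosen), so $F_\alpha F_\beta=F_A F_B$. Third and last, once the middle equality $F_A F_B=F_{A\oplus B}$ is in hand, Corollary~\ref{cor:fundamental} applied to the poset $\P=A\oplus B$ immediately yields $F_{A\oplus B}=\sum_{\L\in\mathscr{L}(A\oplus B)}F_{\Des(\L)}$. So the genuine content is the middle equality $F_A F_B=F_{A\oplus B}$, together with the standard remark that $\mathscr{L}(A\oplus B)$ is the set of shuffles of $A$ and $B$.

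For the middle equality I would argue by a weight-preserving bijection between sets of poset-partitions. Since $F_A$ and $F_B$ live in one common alphabet $x_1,x_2,\dots$, I first relabel so that $A$ is a linear order on $[p]$ while $B$ is carried by the shift $k\mapsto k+p$ to a linear order on $\{p+1,\dots,p+q\}$; this shift is order-preserving on the integers, hence it does not change which cover relations are descents, and so the shifted copy of $B$ still has descent composition $\beta$ and the same generating function. Now form $A\oplus B$ on $[p+q]$ as in the statement. Each term in the expansion $F_A F_B=\sum_{f\in\A(A),\,g\in\A(B)}x_{f(1)}\cdots x_{f(p)}\cdot x_{g(1)}\cdots x_{g(q)}$ comes from a pair $(f,g)$, and I would send $(f,g)$ to the map $h$ on $A\oplus B$ defined by $h(i)=f(i)$ for $i\le p$ and $h(p+k)=g(k)$ for $k\le q$. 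Because $A\oplus B$ imposes no relation between the block $[p]$ and the block $\{p+1,\dots,p+q\}$, the map $h$ is an $(A\oplus B)$-partition exactly when $f$ is an $A$-partition and $g$ is a $B$-partition; thus $(f,g)\mapsto h$ is a bijection $\A(A)\times\A(B)\to\A(A\oplus B)$ matching monomial with monomial, and summing over it gives $F_A F_B=F_{A\oplus B}$.

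To close, I would recall why $\mathscr{L}(A\oplus B)$ \emph{is} the shuffle product of $A$ and $B$: a linear extension of $A\oplus B$ is a total order on $[p+q]$ refining the chain $A$ and the chain $B$ with nothing else imposed, so listing its elements from least to greatest produces a word that interleaves the sequence underlying $A$ with the sequence underlying $B$ while preserving the internal order of each, i.e., exactly an element of their shuffle product; conversely every such interleaving is a linear extension of $A\oplus B$. Each $\L\in\mathscr{L}(A\oplus B)$ then contributes the term $F_{\Des(\L)}$ via Corollary~\ref{cor:fundamental}, which is the last equality.

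I do not expect a real obstacle: the corollary is a formal consequence of Gessel's proposition and the Fundamental Theorem of $\P$-partitions. The one place a careless argument could slip is the bookkeeping of labels: one must check that the order-preserving shift used to stack $B$ on top of $A$ leaves $\Des(B)$, and hence $F_B$, unchanged, and one must use that it is precisely the \emph{absence} of cross relations in $A\oplus B$ that allows an arbitrary pair consisting of an $A$-partition and a $B$-partition to be glued into a single $(A\oplus B)$-partition. Both points are routine but essential, and without them the statement would not even be well posed, since the ``$F_A F_B$'' of the corollary is literally a product of polynomials in one shared set of variables.
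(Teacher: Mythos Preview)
Your proposal is correct and matches the paper's intended argument: the paper states the corollary as ``an immediate and powerful consequence'' of the Fundamental Theorem~\ref{thm:fundamental} and gives no proof, so your three-link chain via Proposition~\ref{prop:gessel}, the bijection $\A(A)\times\A(B)\cong\A(A\oplus B)$, and Corollary~\ref{cor:fundamental} is exactly the expected unpacking. Your care with the order-preserving relabelling of $B$ and with the absence of cross relations in $A\oplus B$ fills in precisely the details the paper leaves implicit.
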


\begin{example}\label{ex:shuffle}
  Let $\alpha = (2)$ and $\beta = (2)$, and set $A = (1\prec 2)$ and $B = (3\prec 4)$. We do have that $\Des(A)=\alpha$ and $\Des(B)=\beta$.
  Since $A$ and $B$ are linear orders, let us represent them using words. That is $A=12$ and $B=34$.
  Then
  \[ A \shuffle B = \left\{ 1234, \ 1324, \ 1342, \ 3124, \ 3142, \ 3412 \right\}, \]
  and the words we get are exactly the linear extensions of $\mathscr{L}(A \oplus B)$. They are distinct but could have the same descent composition. For example $\Des(1324)=\Des(3412)=(2,2)$, hence the coefficient of 
  $F_{(2,2)}$ in the product is 2.
We have
  \[ F_{(2)} F_{(2)} = F_{A \oplus B} = F_{(4)} + 2 F_{(2,2)} + F_{(3,1)} + F_{(1,3)}+ F_{(1,2,1)} \]
\end{example}

\begin{remark}\label{rem:quasi}
At this point, it is important to notice that if we order the monomials lexicographically, then the leading term of $F_\alpha$
is $x_1^{\alpha_1}x_2^{\alpha_2}\cdots x_\ell^{\alpha_\ell}$
for any strong composition $\alpha=(\alpha_1, \alpha_2,\ldots,\alpha_\ell)$. Hence the leading terms for  the $F_\alpha$
are all distinct as $\alpha$ runs over all strong composition. This shows that the set $\big\{ F_\alpha\big\}$ is linearly independent.
Corollary~\ref{cor:shuffle} shows that this set spans all possible products of $F_\alpha$'s. Hence the set $\big\{ F_\alpha\big\}$
is a basis of the algebra it generates.
\end{remark}

For another compelling example, consider the labeled poset $\P_{\lambda}$ associated to a partition $\lambda$, as illustrated in Fig.~\ref{fig:partition}. The generating function for $\P_{\lambda}$ precisely enumerates semistandard Young tableaux, and so
\begin{equation}\label{e:schur}
  F_{\P_{\lambda}} = s_{\lambda},
\end{equation}
is the Schur function corresponding to $\lambda$. Linear extensions of $\P_{\lambda}$ are in bijection with standard Young tableaux, and if $\L$ corresponds to $T$, then $\Des(\L) = \Des(T)$. Therefore the Fundamental Theorem gives the following alternative expansion for Schur functions in terms of standard Young tableaux first shown in \cite{Ges84}.

\begin{figure}[ht]
  \begin{center}
    \begin{tikzpicture}[
        roundnode/.style={circle, draw=black, thick, minimum size=1ex},
        label/.style={%
          postaction={ decorate,transform shape,
            decoration={ markings, mark=at position .5 with \node #1;}}}]
      \node[roundnode] at (3,3) (A5) {$4$};
      \node[roundnode] at (5,3) (A1) {$8$};
      \node[roundnode] at (0,2) (A8) {$1$};
      \node[roundnode] at (2,2) (A6) {$3$};
      \node[roundnode] at (4,2) (A2) {$7$};
      \node[roundnode] at (1,1) (A7) {$2$};
      \node[roundnode] at (3,1) (A3) {$6$};
      \node[roundnode] at (2,0) (A4) {$5$};
      \draw[thin,label={[above]{\Blue{$\scriptstyle \leq$}}}] (A4) -- (A3) ;
      \draw[thin,label={[above]{\Blue{$\scriptstyle \leq$}}}] (A3) -- (A2) ;
      \draw[thin,label={[above]{\Blue{$\scriptstyle \leq$}}}] (A2) -- (A1) ;
      \draw[thin,label={[above]{\Blue{$\scriptstyle \leq$}}}] (A7) -- (A6) ;
      \draw[thin,label={[above]{\Blue{$\scriptstyle \leq$}}}] (A6) -- (A5) ;
      \draw[thin,label={[below]{\Blue{$\scriptstyle <$}}}]    (A4) -- (A7) ;
      \draw[thin,label={[below]{\Blue{$\scriptstyle <$}}}]    (A7) -- (A8) ;
      \draw[thin,label={[below]{\Blue{$\scriptstyle <$}}}]    (A3) -- (A6) ;
      \draw[thin,label={[below]{\Blue{$\scriptstyle <$}}}]    (A2) -- (A5) ;
    \end{tikzpicture}
  \end{center}
  \caption{\label{fig:partition}The labeled poset corresponding to the partition $(4,3,1)$.}
\end{figure}

\begin{corollary}\label{cor:syt}
  For $\lambda$ a partition, we have
  \begin{equation}\label{e:syt}
    s_{\lambda} = \sum_{T \in \mathrm{SYT}(\lambda)} F_{\Des(T)} .
  \end{equation}
\end{corollary}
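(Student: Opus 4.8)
The plan is to obtain Corollary~\ref{cor:syt} by specializing Corollary~\ref{cor:fundamental} to the labeled poset $\P_\lambda$ drawn in Figure~\ref{fig:partition}. Recall that $\P_\lambda$ has one element for each cell of the Young diagram of $\lambda$, that its Hasse diagram is the grid of that diagram, and that the labeling is normalized so that every maximal horizontal chain (a row) consists of covers $i\precdot j$ with $i<j$ while every maximal vertical chain (a column) consists of covers $i\precdot j$ with $i>j$, the labels decreasing as one moves up a column and increasing as one moves right along a row. By Definition~\ref{def:P-partition}, a $\P_\lambda$-partition $f$ is then exactly a filling of $\lambda$ that is weakly increasing along rows and strictly increasing up columns, i.e. a semistandard Young tableau of shape $\lambda$; hence $F_{\P_\lambda}$ is the Schur generating function, which is \eqref{e:schur}.

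Next I would record the bijection between $\mathscr{L}(\P_\lambda)$ and $\mathrm{SYT}(\lambda)$: a linear extension $\L$ assigns to each cell its rank in $\L$, and since $\L$ refines both the row order and the column order of $\P_\lambda$ the resulting filling $T=T(\L)$ strictly increases along rows and up columns, so it is a standard Young tableau; conversely every standard Young tableau records such a linear extension, and the two operations are mutually inverse.

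The only genuine work is checking that $\Des(\L)=\Des(T(\L))$, and this is the step I expect to need the most care. Writing $\L$ as the word $w=w_1\cdots w_p$ that lists the labels of $\P_\lambda$ in increasing $\L$-order, Definition~\ref{def:strong-descent} identifies $\Des(\L)$ with the composition whose descent set is $\{\,k : w_k>w_{k+1}\,\}$. Since $w_k$ is the cell of $T$ carrying the entry $k$, a descent occurs at $k$ precisely when that cell has a larger label than the cell carrying $k+1$. By the chosen labeling a larger label means either a strictly lower row or the same row further to the right; but entries of a standard Young tableau strictly increase along each row, so $k$ and $k+1$ in a common row would place $k$ to the left and give no descent. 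Hence $k$ is a descent of $\L$ exactly when the cell of $k+1$ lies in a strictly higher row than the cell of $k$, which is precisely the condition for $k$ to be a descent of $T$. Thus $\Des(\L)=\Des(T)$ as compositions, and substituting this together with \eqref{e:schur} into Corollary~\ref{cor:fundamental} gives
\[ s_\lambda = F_{\P_\lambda} = \sum_{\L \in \mathscr{L}(\P_\lambda)} F_{\Des(\L)} = \sum_{T \in \mathrm{SYT}(\lambda)} F_{\Des(T)}. \]
Everything apart from this labeling-dependent descent comparison is a direct unwinding of definitions, and the whole argument runs parallel to the one first given in~\cite{Ges84}.
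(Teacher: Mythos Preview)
Your proposal is correct and follows exactly the approach the paper indicates: specialize Corollary~\ref{cor:fundamental} to the poset $\P_\lambda$, invoke \eqref{e:schur}, use the bijection between linear extensions and standard Young tableaux, and match $\Des(\L)$ with $\Des(T)$. The paper merely asserts the bijection and the equality of descent compositions in the paragraph preceding the corollary, so your write-up simply supplies the details the paper leaves implicit.
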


%
\section{ $(\P,\rho)$-partitions}
%
\label{sec:restricted}

We introduce a new generalization of $\P$-partitions by restricting the values of the images of the $\P$-partition with an integer-valued map $\rho$ on $\P$.

\begin{definition}\label{def:restricted-partition}
  Given a poset $\P$ on $[p]$, and a map $\rho : \P \rightarrow \mathbb{Z}$, a \emph{ $(\P,\rho)$-partition} is a $\P$-partition such that $f(i) \leq \rho(i)$ for all $i \in \P$. We denote by $\A_{\rho}(\P)$ the set of all  $(\P,\rho)$-partitions.
\end{definition}

Note that if $\rho(i)\le 0$ for some $i\in[p]$, then $\A_{\rho}(\P)=\varnothing$, and so some restriction maps $\rho$ are too restrictive. Nevertheless, these are still useful to consider.

For a fixed poset $\P$ on $[p]$, some restriction maps $\rho$ are redundant. For example, if we consider the labeled poset in Fig.~\ref{fig:poset-restrict} and a restriction map $\rho$ such that $\rho(3) = 3$ and $\rho(6) = 2$, then since any $\P$-partition $f$ must satisfy $f(3) \leq f(6)$, the restriction on $3$ can never be attained. Such redundancy happens if for two element $i,j\in \P$ we have $i\prec j$ and $\rho(i)>\rho(j)$.

\begin{figure}[ht]
  \begin{center}
    \begin{tikzpicture}[
        roundnode/.style={circle, draw=black, thick, minimum size=1ex},
        label/.style={%
          postaction={ decorate,transform shape,
            decoration={ markings, mark=at position .5 with \node #1;}}}]
      \node[roundnode] at (0,3)   (P5) {$5$};
      \node[roundnode] at (1,1.5) (P1) {$1$};
      \node[roundnode] at (2,0)   (P7) {$7$};
      \node[roundnode] at (2,3)   (P2) {$2$};
      \node[roundnode] at (3,1)   (P4) {$4$};
      \node[roundnode] at (3,2)   (P6) {$6$};
      \node[roundnode] at (4,0)   (P3) {$3$};
      \node[left  = 1pt of P1] {\Blue{$\scriptstyle 4\geq$}};
      \node[right = 1pt of P2] {\Blue{$\scriptstyle\leq 2$}};
      \node[right = 1pt of P3] {\Blue{$\scriptstyle\leq 3$}};
      \node[right = 1pt of P4] {\Blue{$\scriptstyle\leq 2$}};
      \node[left  = 1pt of P5] {\Blue{$\scriptstyle 6\geq$}};
      \node[right = 1pt of P6] {\Blue{$\scriptstyle\leq 2$}};
      \node[left  = 1pt of P7] {\Blue{$\scriptstyle 1\geq$}};
      \draw[thin] (P5) -- (P1) ;
      \draw[thin] (P1) -- (P2) ;
      \draw[thin] (P4) -- (P3) ;
      \draw[thin] (P6) -- (P4) ;
      \draw[thin] (P2) -- (P6) ;
      \draw[thin] (P1) -- (P7) ;
      \draw[thin] (P4) -- (P7) ;
    \end{tikzpicture}
  \end{center}
  \caption{\label{fig:poset-restrict}An example of a poset $\P$ together with a restriction map $\rho$ with restrictions indicated at each node.}
\end{figure}

Furthermore, looking at Fig.~\ref{fig:poset-restrict}, we see that if we impose the restriction $\rho(2)=2$, then a $(\P,\rho)$-partition for that $\P$ would need to satisfy
$$1\le f(7) < f(4) \le f(6) < f(2) \le \rho(2)=2$$
which is not possible. The set $\A_{\rho}(\P)$ is empty unless $\rho(i)$ is larger than the size of any decreasing chain in $\P$ ending at $i$. To make this more general, we introduce the following definitions.

For $i \preceq j$ in $\P$, define the \emph{maximum descent distance} $\delta(i,j)$ by
\begin{equation}\label{e:delta}
  \delta(i,j) = \max\{ k \mid  i = i_0\prec i_1\prec\cdots\prec i_k=j \text{ and }  i_0> i_1>\cdots> i_k  \} .
\end{equation}
For example, in Fig.~\ref{fig:poset-restrict} we have $\delta(7,2) = 2$ from the chain $7 \prec 4 \prec 2$.

Using this terminology, given a restriction map $\rho$ for a poset $\P$, we may define a canonical $\P$-increasing map as follows.

\begin{definition}\label{def:rho-max}
  Given a poset $\P$ on $[p]$ and a map $\rho : \P \rightarrow \mathbb{Z}$, the \emph{maximal $\P$-increasing $(\P,\rho)$-partition} $\rhomax{\P}$ is
  \begin{equation}\label{e:max-rho}
    \rhomax{\P}(i) = \min\{ \rho(x) - \delta(i,x) \mid i \preceq x\} .
  \end{equation}
\end{definition}
  
\begin{example}\label{ex:max-rho}
  Let $\P$ be the poset in Fig.~\ref{fig:poset-restrict}, and let $\rho = (4,6,2,3,2,3,3)$ taken in the natural order. Then since $2$ and $5$ are maximal elements, we have $\rhomax{\P}(2) = \rho(2) = 6$ and $\rhomax{\P}(5) = \rho(5) = 2$. The most interesting computation happens for the minimal element $7$, where we have
  \[ \rhomax{\P}(7) = \min\{ \rho(x)-\delta(7,x) \mid x = 7,4,1,6,5,2 \} = 1 \]
  achieved by taking $x=5$. The complete example has $\rhomax{\P}=(2,6,2,3,2,3,1)$, which is the maximal $\P$-increasing $(\P,\rho)$-partition.
\end{example}

Given a poset $\P$ and any map $f: \P \rightarrow \mathbb{N}$, let $\comp(f)$ be the weak composition whose $r$th part is given by $c_r = \#\{ i \in \L \mid f(i) = r \}$. Then for $f \in \A_{\rho}(\P)$, $\comp(f) \trianglerighteq \comp(\rhomax{\P})$ in dominance order. We have the following.

\begin{proposition}\label{prop:max-rho}
  Given any restriction $\rho : \P \rightarrow \mathbb{N}$, we have $\A_{\rho}(\P) \neq \varnothing$ if and only if $\rhomax{\P}(i) \geq 1$ for all $i \in \P$. Furthermore, when this is the case, we have $\A_{\rho}(\P)=\A_{\rhomax{\P}}(\P)$.
\end{proposition}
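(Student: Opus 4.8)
The plan is to establish the set equality $\A_{\rho}(\P)=\A_{\rhomax{\P}}(\P)$ first, valid whether or not either side is empty, since both halves of the biconditional then follow with little extra work. One inclusion is immediate: taking $x=i$ in \eqref{e:max-rho} and using $\delta(i,i)=0$ gives $\rhomax{\P}(i)\le\rho(i)$, so every $(\P,\rhomax{\P})$-partition is automatically a $(\P,\rho)$-partition. For the reverse inclusion, fix $f\in\A_{\rho}(\P)$, an element $i\in\P$, and some $x\succeq i$. Choose a chain from $i$ to $x$ carrying $\delta(i,x)$ descents; reading the defining inequalities of a $\P$-partition up this chain, each descent forces a strict increase of $f$ while each remaining step forces at least a weak increase, so $f(x)\ge f(i)+\delta(i,x)$ and hence $f(i)\le\rho(x)-\delta(i,x)$. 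Minimising over $x\succeq i$ gives $f(i)\le\rhomax{\P}(i)$, so $f\in\A_{\rhomax{\P}}(\P)$; this pointwise bound is also precisely what yields the dominance statement $\comp(f)\trianglerighteq\comp(\rhomax{\P})$ recorded before the proposition.

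Granting the equality, the forward implication is immediate: if $f\in\A_{\rho}(\P)\neq\varnothing$ then $1\le f(i)\le\rhomax{\P}(i)$ for every $i$, the lower bound because $\P$-partitions take positive values and the upper bound by the inclusion just proved. For the converse I would show that, under the hypothesis $\rhomax{\P}(i)\ge1$ for all $i$, the map $\rhomax{\P}$ is itself a $\P$-partition; since trivially $\rhomax{\P}(i)\le\rhomax{\P}(i)$, it then lies in $\A_{\rhomax{\P}}(\P)=\A_{\rho}(\P)$, which is therefore nonempty. Positivity of $\rhomax{\P}$ is the hypothesis, so only the two $\P$-partition inequalities remain, and these reduce to a monotonicity property of the descent distance: if $i\prec j\preceq x$ then $\delta(i,x)\ge\delta(j,x)$, with $\delta(i,x)\ge\delta(j,x)+1$ when moreover $i>j$. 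Both follow by prepending to a chain from $j$ to $x$ realising $\delta(j,x)$ any chain from $i$ to $j$, the step $i\prec j$ supplying the extra descent in the second case. Substituting this into \eqref{e:max-rho} and using $\{x:x\succeq j\}\subseteq\{x:x\succeq i\}$ when $i\prec j$ gives $\rhomax{\P}(i)\le\rhomax{\P}(j)$, and strictly less when $i>j$, as needed.

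I expect the delicate point to be precisely this interplay between the two "directions" of the descent distance: $\rhomax{\P}$ is assembled from the numbers $\delta(i,x)$ with $x$ \emph{above} $i$, whereas the true obstruction to nonemptiness is governed by the pointwise-smallest $\P$-partition $g$, given by $g(i)=1+\max\{\delta(j,i)\mid j\preceq i\}$ with $j$ \emph{below} $i$, and one must confirm that these two descriptions agree. (An alternative route to the converse is to write down $g$ directly, verify it is the smallest $\P$-partition, and check $g(i)\le\rho(i)$ from $\rho(i)\ge\rhomax{\P}(j)+\delta(j,i)\ge1+\delta(j,i)$ for every $j\preceq i$, then maximise over $j$.) Everything else --- the two inclusions, the propagation of values up a chain, and the elementary manipulations of chains used for $\delta$ --- is routine.
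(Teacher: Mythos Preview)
Your proposal is correct and follows essentially the same approach as the paper's proof: both argue that $\rhomax{\P}(i)$ is the largest value any $(\P,\rho)$-partition can take at $i$, and that $\rhomax{\P}$ is itself a $(\P,\rho)$-partition when all its values are at least $1$. The paper simply asserts these two facts (``by definition''), whereas you supply the detailed verifications---the chain argument bounding $f(i)$ above by $\rho(x)-\delta(i,x)$, and the monotonicity of $\delta$ needed to check that $\rhomax{\P}$ satisfies the $\P$-partition inequalities---so your write-up is a fleshed-out version of the same argument rather than a genuinely different route.
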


\begin{proof}
  By definition $\rhomax{\P}(i)$ is the largest possible value that $f(i)$ can take for any $(\P,\rho)$-partition $f$, so if $\rhomax{\P}(i) \leq 0$ for some $i$, then there are no $(\P,\rho)$-partitions. Conversely, if $\rhomax{\P}(i) \geq 1$ for all $i$, then $\rhomax{\P}(i)$ itself is a $(\P,\rho)$-partition. The equality $\A_{\rho}(\P)=\A_{\rhomax{\P}}(\P)$ follow  as $\rhomax{\P}$ is the maximal $(\P,\rho)$-partition for $\rho$, and all other $(\P,\rho)$-partitions must be bounded above by $\rhomax{\P}$.
\end{proof}

We define the generating polynomial of $(\P,\rho)$-partitions by
\begin{equation}\label{e:poset-gp}
  \fund_{(\P,\rho)} = \sum_{f \in \A_{\rho}(\P)} x_{f(1)} \cdots x_{f(p)} .
\end{equation}
By Proposition~\ref{prop:max-rho}, we have $\fund_{(\P,\rho)}=\fund_{(\P,\rhomax{\P})}$ and $\fund_{(\P,\rho)}=0$ unless $\rhomax{\P}(i) \geq 1$ for all minimal elements $i$.

\subsection{Linear  $(\P,\rho)$-partitions}
\label{sec:linear}

The case of linear orders is again of particular interest. To study it, we generalize the notion of descent compositions to account for the restriction map.

\begin{definition}\label{def:reduced-weak-descent}
 For $\L$ a linear order on $[p]$ and $\rho$ a restriction map satisfying $\rhomax{\L}(i) \geq 1$ for all $i \in \L$, we define two associated weak compositions.
 \begin{enumerate}
 \item
 The \emph{reduced weak descent composition of $(\L,\rho)$}, denoted by $\red(\L,\rho)$, is formed as follows. Remove edges between descents, i.e. whenever $i \precdot j$ and $i>j$, and call the resulting chains $C_1,\ldots,C_r$ in ascending order of $\L$. For $s=1,\ldots,r$, set $c_s = \rhomax{\L}(\min_{\L}\{ C_s\})$, and define the part $c_s$ of $\red(\L,\rho)$ to be $\red(\L,\rho)_{c_s} = |C_s|$ and set all other parts to $0$.
  \item
  The \emph{weak descent composition of $(\L,\rho)$}, denoted by $\des(\L,\rho)$, is formed as follows. Remove edges between $i$ and $j$ whenever $i \precdot j$ and $\rhomax{\L}(i) < \rhomax{\L}(j)$, and call the resulting chains $C'_1,\ldots,C'_\ell$ in ascending order of $\L$. For $s=1,\ldots,\ell$, set $c'_s = \rhomax{\L}(\min_\L\{ C'_s\})$, and define the part $c'_s$ of $\des(\L,\rho)$ as $\des(\L,\rho)_{c'_s} = |C'_s|$ and set all other parts to $0$.
  \end{enumerate}
\end{definition}

For Definition~\ref{def:reduced-weak-descent}(1), we have the same chain decomposition as in Definition~\ref{def:strong-descent}. In particular, for any restriction map $\rho$ we have
\begin{equation}\label{e:flat-des}
  \flatten(\red(\L,\rho)) = \Des(\L) ,
\end{equation}
and so $\red(\L,\rho)$ encodes $\Des(\L)$ along with information about the restriction $\rho$. In Lemma~\ref{lem:ref_des} we will see that  $\des(\L,\rho)$ is a refinement of the information in  $\red(\L,\rho)$.

For a linear order $\L$ on $[p]$, we now always list the values of $\rho$ according to the order $\L$, not according to the natural order on $[p]$. With this convention, $\rhomax{\L}$ is always weakly increasing in the order of $\L$.

\begin{example}\label{ex:reduced-weak-descent}
  Let $\L$ be the linear order $2 \prec 5 \prec 1 \prec 4 \prec 6 \prec 7 \prec 3 \prec 8 \prec 9$ and let $\rho=(\rho(2),\rho(5),\rho(1),\rho(4),\rho(6),\rho(7),\rho(3),\rho(8),\rho(9)) = (2,3,4,3,6,8,6,8,8)$ be a restriction map. The chains after removing edges for descents become
  \[ \overbrace{2 \prec 5}^{C_1} \hspace{2em} \overbrace{1 \prec 4 \prec 6 \prec 7}^{C_2} \hspace{2em} \overbrace{3 \prec 8 \prec 9}^{C_3} \ . \]
  Note that this is the same ascending chain decomposition as in Ex.~\ref{ex:strong-descent} where we computed the descent composition for the same linear order $\L$. We compute $\rhomax{\L}=(2,2,3,3,5,5,6,8,8)$, and so $c_1 = \rhomax{\L}(2) = 2$, $c_2 = \rhomax{\L}(1) = 3$, $c_3 = \rhomax{\L}(3) = 6$. Therefore, $\red(\L,\rho) = (0,|C_1|,|C_2|,0,0,|C_3|) = (0,2,4,0,0,3)$.
  
  The chains after removing edges for $i \precdot j$ and $\rhomax{\L}(i) < \rhomax{\L}(j)$ become
  \[ \overbrace{2 \prec 5}^{C'_1} \hspace{2em} \overbrace{1 \prec 4}^{C'_2} \hspace{2em} \overbrace{6 \prec 7}^{C'_3} \hspace{2em} \overbrace{3}^{C'_4} \hspace{2em} \overbrace{8 \prec 9}^{C'_5} \ . \]
  We have $c'_1 = \rhomax(2) = 2$, $c'_2 = \rhomax(1) = 3$, $c'_3 = \rhomax(6) = 5$, $c'_4 = \rhomax(3) = 6$, and $c'_5 = \rhomax(8) = 8$. Therefore $\des(\L,\rho) = (0,|C'_1|,|C'_2|,0,|C'_3|,|C'_4|,0,|C'_5|) = (0,2,2,0,2,1,0,2)$.
\end{example}

Notice, when $i \precdot j$ and $i>j$, we have $\rhomax{\L}(i) < \rhomax{\L}(j)$. Thus, the following lemma expresses precisely how $\des(\L,\rho)$ refines $\red(\L,\rho)$. 

\begin{lemma}\label{lem:ref_des}
  For $\L$ a linear order on $[p]$ and $\rho: \L \rightarrow \mathbb{N}$ a restriction map, we have $\flatten(\des(\L,\rho))$ refines $\flatten(\red(\L,\rho))$ and $\red(\L,\rho) \trianglerighteq \des(\L,\rho)$.
  \label{lem:red-des}
\end{lemma}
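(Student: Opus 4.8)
The plan is to prove both assertions from a single structural observation about how the two chain decompositions in Definition~\ref{def:reduced-weak-descent} are related. First I would fix the ascending chain decomposition $C_1,\ldots,C_r$ obtained by cutting at descents (cuts where $i\precdot j$ and $i>j$), and the finer decomposition $C'_1,\ldots,C'_\ell$ obtained by cutting at every cover $i\precdot j$ with $\rhomax{\L}(i)<\rhomax{\L}(j)$. The key point, flagged in the sentence immediately before the lemma, is that a descent $i\precdot j$ (so $i>j$) forces $f(i)<f(j)$ for every $(\L,\rho)$-partition, hence forces $\rhomax{\L}(i)<\rhomax{\L}(j)$; therefore every cut made in forming the $C_s$ is also a cut made in forming the $C'_s$. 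Consequently each $C'_t$ is entirely contained in a unique $C_s$, and the $C'_t$ contained in a given $C_s$ partition it into consecutive sub-chains in the order of $\L$. This is exactly the combinatorial content of ``$\flatten(\des(\L,\rho))$ refines $\flatten(\red(\L,\rho))$'': the part sizes $|C'_t|$ refine the part sizes $|C_s|$, and since flattening just deletes the zeros introduced by indexing parts by $\rhomax{\L}$-values, the refinement statement at the level of strong compositions follows directly from $\flatten(\red(\L,\rho))=\Des(\L)$ in~\eqref{e:flat-des} together with the analogous identity $\flatten(\des(\L,\rho))=(|C'_1|,\ldots,|C'_\ell|)$.

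For the dominance inequality $\red(\L,\rho)\trianglerighteq\des(\L,\rho)$, I would compare partial sums $\red(\L,\rho)_1+\cdots+\red(\L,\rho)_k$ against $\des(\L,\rho)_1+\cdots+\des(\L,\rho)_k$ for each $k$. Both compositions assign the block of cardinality $|C_s|$ (resp. $|C'_t|$) to the coordinate indexed by $\rhomax{\L}$ of its $\L$-minimum, and all other coordinates are $0$. Recall that $\rhomax{\L}$ is weakly increasing along $\L$, so along a single chain $C_s$ the minimum over $C_s$ of $\rhomax{\L}$ is attained at the $\L$-first element, and when we cut $C_s$ into consecutive pieces $C'_{t_1},\ldots,C'_{t_m}$ the values $\rhomax{\L}(\min_\L C'_{t_1})\le\cdots\le\rhomax{\L}(\min_\L C'_{t_m})$ are weakly increasing, with the first equal to $\rhomax{\L}(\min_\L C_s)$. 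So in $\des(\L,\rho)$ the mass $|C_s|$ has been redistributed from the single coordinate $c_s=\rhomax{\L}(\min_\L C_s)$ to coordinates $\ge c_s$; pushing mass to the right weakly decreases every prefix sum. Summing this over all $s$ (the blocks from distinct $C_s$ occupy disjoint ``intervals'' of coordinates in a way compatible with this argument, since the $c_s$ are themselves weakly increasing in $s$ — here one uses that consecutive chains $C_s, C_{s+1}$ are separated by a descent, forcing $c_s < c_{s+1}$), we get $\red(\L,\rho)\trianglerighteq\des(\L,\rho)$ coordinatewise on prefix sums, which is the claim.

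I would organize the write-up as: (1) a short lemma or inline observation that descents force strict increase of $\rhomax{\L}$, hence every descent-cut is a $\rhomax{\L}$-cut, so $\{C'_t\}$ refines $\{C_s\}$; (2) deduce the refinement of flattenings from this containment together with~\eqref{e:flat-des}; (3) prove the dominance inequality by the prefix-sum/mass-redistribution argument above, carefully tracking that within each $C_s$ the sub-blocks are assigned to weakly increasing coordinates starting at $c_s$, and that the $c_s$ are strictly increasing in $s$. The main obstacle I anticipate is purely bookkeeping: making the prefix-sum comparison rigorous requires being careful that the coordinates used by $\des(\L,\rho)$ for sub-blocks of $C_s$ do not ``overshoot'' past $c_{s+1}$ in a way that would disturb the accounting — but this cannot happen, because every element of $C_{s+1}$ has strictly larger $\rhomax{\L}$-value than every element of $C_s$ (again by the descent-forces-strict-increase observation applied at the cut between them), so all coordinates touched by sub-blocks of $C_s$ are $<c_{s+1}$. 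Once that separation is in hand the inequality is immediate block by block. I expect no serious analytic difficulty; the content is entirely the monotonicity of $\rhomax{\L}$ along $\L$ and the refinement relation between the two cut-sets.
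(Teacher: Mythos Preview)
Your proposal is correct and follows essentially the same approach as the paper: both arguments observe that the $C'_t$ refine the $C_s$ (since every descent-cut is also a $\rhomax{\L}$-cut), deduce the refinement of flattenings immediately, and then use that $\rhomax{\L}$ is weakly increasing along $\L$ to conclude that each sub-block $C'_t\subseteq C_s$ is placed at a coordinate $c'_t\ge c_s$, which yields the dominance inequality by comparing prefix sums block by block. Your ``overshoot'' concern is unnecessary: once you know each sub-block of $C_s$ lands at a coordinate $\ge c_s$, the prefix-sum comparison works for every $k$ regardless of whether those coordinates exceed $c_{s+1}$, so no separation between the blocks is needed.
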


\begin{proof}
  The chains $C_1,\ldots, C_r$ are unions of consecutive chains $C'_1,\ldots, C'_\ell$, and so $(|C'_1|,\ldots,|C'_{\ell}|)$ refines $(|C_1|,\ldots,|C_r|)$, proving the first statement. If $C'_s$ is a sub-chain of $C_t$ for some $s \leq t$, then $c'_s = \rhomax{\L}(\min_\L\{ C'_s\}) \geq \rhomax{\L}(\min_{\L}\{ C_t\}) = c_t$ since $\rhomax{\L}$ is weakly increasing with respect to $\L$. Therefore each nonzero part of $\red(\L,\rho)$ occurs weakly before all of the corresponding nonzero parts of $\des(\L,\rho)$ that refine it, proving the second statement.
\end{proof}

Under certain assumptions on $\L$ and $\rho$, the generating polynomial for $(\L,\rho)$ depends only on the reduced weak descent composition $\red(\L,\rho)$.

The fundamental quasisymmetric functions inspired the \emph{fundamental slide polynomials} introduced by Assaf and Searles \cite{AS17}(Definition~3.6) in the context of Schubert calculus. Fundamental slide polynomials, indexed by weak compositions, form a basis for the full polynomial ring \cite{AS17}(Theorem~3.9). Here, we will say only \emph{slide polynomials} as we do  not use the other polynomials defined~\cite{AS17}.

\begin{definition}[\cite{AS17}]
  For a weak composition $a$ of length $n$, the \emph{slide polynomial} $\fund_{a}$ is given by
  \begin{equation}
    \fund_{a}(x_1,\ldots,x_n) = \sum_{\substack{\flatten(b) \ \mathrm{refines} \ \flatten(a) \\ b \trianglerighteq a}} x_1^{b_1} \cdots x_n^{b_n} .
    \label{e:fundamental-shift}
  \end{equation}
  where the sum is over weak compositions $b$ that dominate $a$ and for which the flattening of $b$ refines the flattening of $a$.
  \label{def:fundamental-shift}
\end{definition}

For example, we have
\begin{displaymath}
  \fund_{(3,0,2)} = x_1^3 x_3^2 + x_1^3 x_2^2 + x_1^3 x_2 x_3 .
\end{displaymath}

Note that if $a$ is a weak composition of length $n$ with the property $a_j>0$ whenever $a_i>0$ for some $i<j$, then $\fund_a = F_{\flatten(a)}(x_1,\ldots,x_n)$ \cite{AS17}(Lemma~3.8). In particular, fundamental quasisymmetric polynomials are slide polynomials. Furthermore, we have the following result from \cite{AS17}(Theorem~4.5).

\begin{theorem}[\cite{AS17}]\label{thm:stable}
  For a weak composition $a$, we have
  \begin{equation}
    \lim_{m\rightarrow\infty}\fund_{0^m \times a}(x_1,\ldots,x_m,0,\ldots,0) = F_{\flatten(a)}(x_1,x_2,\ldots),
  \end{equation}
  where $0^m \times a$ denotes the weak composition obtained by prepending $m$ $0$'s to $a$.
\end{theorem}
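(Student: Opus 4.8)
The plan is to prove the sharper, finite-variable statement that for every integer $m\ge 0$,
\begin{equation*}
  \fund_{0^m\times a}(x_1,\ldots,x_m,0,\ldots,0) = F_{\flatten(a)}(x_1,\ldots,x_m),
\end{equation*}
and then to let $m\to\infty$. The limit step will be immediate: the right-hand side is exactly the truncation of $F_{\flatten(a)}(x_1,x_2,\ldots)$ to the variables $x_1,\ldots,x_m$, so the coefficient of any fixed monomial stabilizes once $m$ exceeds the number of variables appearing in it, whence $\lim_{m\to\infty}F_{\flatten(a)}(x_1,\ldots,x_m) = F_{\flatten(a)}(x_1,x_2,\ldots)$ directly from the definition of $F_\alpha$.

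For the finite-variable identity I would first set $n=\ell(a)$, so that $0^m\times a$ has length $m+n$ with $\flatten(0^m\times a)=\flatten(a)$, and expand $\fund_{0^m\times a}$ via Definition~\ref{def:fundamental-shift} as the sum of monomials $x_1^{b_1}\cdots x_{m+n}^{b_{m+n}}$ over weak compositions $b$ of length $m+n$ such that $\flatten(b)$ refines $\flatten(a)$ and $b\trianglerighteq 0^m\times a$. Specializing $x_{m+1}=\cdots=x_{m+n}=0$ annihilates every term with $b_{m+1}+\cdots+b_{m+n}>0$, leaving only the indices of the form $b=(b',0^n)$ with $b'$ a weak composition of length $m$; for such $b$ we have $\flatten(b)=\flatten(b')$.

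The remaining step is to recognize that the surviving index set is precisely the one defining $F_{\flatten(a)}(x_1,\ldots,x_m)$, namely the weak compositions $b'$ of length $m$ whose flattening refines $\flatten(a)$. The refinement conditions match verbatim, so all that is needed is to check that for $b=(b',0^n)$ the dominance constraint $b\trianglerighteq 0^m\times a$ is automatic: it is trivial on partial sums of index at most $m$, where the partial sums of $0^m\times a$ vanish, and on the partial sum of index $m+j$ with $1\le j\le n$ it reads $|b'|\ge a_1+\cdots+a_j$, which holds because $\flatten(b')$ refining $\flatten(a)$ forces $|b'|=|\flatten(a)|=|a|\ge a_1+\cdots+a_j$. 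Conversely, every such $b'$ yields a legitimate index $b=(b',0^n)$, so the specialization equals $\sum x_1^{b'_1}\cdots x_m^{b'_m}=F_{\flatten(a)}(x_1,\ldots,x_m)$.

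I do not anticipate a genuine obstacle; the argument is essentially bookkeeping with the definitions. The one point that requires a little care is the interaction between the two conditions carving out a slide polynomial: one must exploit the refinement condition to pin the total degree down to $|b'|=|a|$, and only then does the dominance condition become vacuous after the specialization — which is exactly what makes the identification with $F_{\flatten(a)}$ transparent.
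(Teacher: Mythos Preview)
Your argument is correct. The paper does not actually supply a proof of this statement: it is quoted as Theorem~4.5 of \cite{AS17} and used as background. So there is no ``paper's own proof'' to compare against here.

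That said, your approach is the natural one and matches how the result is typically established: unwind Definition~\ref{def:fundamental-shift} for $\fund_{0^m\times a}$, specialize $x_{m+1}=\cdots=x_{m+n}=0$, and observe that the surviving index set coincides with the one in the definition of $F_{\flatten(a)}(x_1,\ldots,x_m)$ because the dominance condition $b\trianglerighteq 0^m\times a$ becomes vacuous once $b$ is supported on the first $m$ coordinates. The one subtlety you correctly flag is that refinement of strong compositions preserves total size, so $|b'|=|a|$ and the tail dominance inequalities hold automatically; with the paper's (slightly informal) definition of refinement this is the intended reading. Passing to the limit is then immediate since $F_{\flatten(a)}(x_1,\ldots,x_m)$ is exactly the truncation of the quasisymmetric function to $m$ variables.
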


With this in mind, we have the following generalization of Proposition~\ref{prop:gessel}.

\begin{proposition}\label{prop:slide}
  Let $\L$ be a linear order on $[p]$ and $\rho: \L \rightarrow \mathbb{N}$ a restriction map. Then $\red(\L,\rho) = \des(\L,\rho)$ if and only if for all $i \precdot j$, whenever $i < j$ we have $\rhomax{\L}(i) = \rhomax{\L}(j)$. Moreover, when this is the case, we have
  \begin{equation}
    \fund_{(\L,\rho)} = \fund_{\red(\rho)} .
  \end{equation}
  Conversely, for any weak composition $a$, there exists $\rho$ and $\L$ such that $\fund_{(\L,\rho)} = \fund_{a}$.
\end{proposition}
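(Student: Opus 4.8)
The plan is to establish the three assertions of Proposition~\ref{prop:slide} in order: the characterization of when $\red(\L,\rho)=\des(\L,\rho)$; the identity $\fund_{(\L,\rho)}=\fund_{\red(\L,\rho)}$ under that hypothesis; and the converse realizability statement.

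\emph{Step 1: the equivalence.} I would begin from the two monotonicity facts about $\rhomax{\L}$ recorded before Lemma~\ref{lem:ref_des}: along the linear order $\L$ the values $\rhomax{\L}$ are weakly increasing, and if $i\precdot j$ with $i>j$ then $\rhomax{\L}(i)<\rhomax{\L}(j)$ (prepend $i$ to a decreasing chain out of $j$ that realizes $\rhomax{\L}(j)$). The edges deleted to form $\red(\L,\rho)$ are exactly the descent covers $i\precdot j$ with $i>j$, while those deleted for $\des(\L,\rho)$ are the covers with $\rhomax{\L}(i)<\rhomax{\L}(j)$; the second fact shows the first family is contained in the second. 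If the hypothesis holds, i.e.\ every ascent cover has equal $\rhomax{\L}$ endpoints, then no ascent cover is deleted for $\des$, so the two families of deleted edges, hence the two chain decompositions, hence the two weak compositions, agree. Conversely, if $\red(\L,\rho)=\des(\L,\rho)$, then comparing flattenings and invoking Lemma~\ref{lem:ref_des} (a refinement which is an equality is trivial) forces the two chain decompositions to coincide, so again no ascent cover is deleted for $\des$; thus every ascent cover $i\precdot j$ satisfies $\rhomax{\L}(i)\ge\rhomax{\L}(j)$, and with weak monotonicity we get equality.

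\emph{Step 2: the generating function.} Assume the hypothesis, so $\rhomax{\L}$ is constant, say $\equiv c_s$, on the $s$-th chain $C_s$ of the $\red$-decomposition, with $c_1<c_2<\cdots<c_r$, and set $a:=\red(\L,\rho)$, the weak composition with $a_{c_s}=|C_s|$ and all other parts $0$. Since $\L$ is linear, an $(\L,\rho)$-partition is determined by the multiset of its values, so $\comp$ is injective on $\A_\rho(\L)$ and $\fund_{(\L,\rho)}=\sum_{f\in\A_\rho(\L)}x^{\comp(f)}$ is a sum of distinct monomials. The crux is the identity
\[
\{\,\comp(f)\ :\ f\in\A_\rho(\L)\,\}\;=\;\{\,b\ :\ \flatten(b)\text{ refines }\flatten(a)\text{ and }b\trianglerighteq a\,\},
\]
after which the claim is immediate from Definition~\ref{def:fundamental-shift}. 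For ``$\subseteq$'': on a valid $f$ the values on $C_s$ lie in $[1,c_s]$ and are strictly below those on $C_{s+1}$ because consecutive chains are separated by a descent, so the value sets of the $C_s$ are disjoint and occur in increasing blocks, giving the refinement; and for each $k$, every element of a chain $C_s$ with $c_s\le k$ has $f$-value $\le c_s\le k$, so $\#\{i:f(i)\le k\}\ge\sum_{c_s\le k}|C_s|$, i.e.\ $\comp(f)\trianglerighteq a$. For ``$\supseteq$'': given such a $b$, split its nonzero parts into consecutive blocks $B_1,\dots,B_r$ of sizes $|C_1|,\dots,|C_r|$ (with all positions of $B_s$ smaller than those of $B_{s+1}$), and fill $C_s$ with the values indexed by $B_s$ in weakly increasing order; then $f$ is weakly increasing along $\L$, strictly increasing across each descent, and $\comp(f)=b$. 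The only point needing the dominance hypothesis is the ceiling $f\le\rhomax{\L}=c_s$ on $C_s$: if the largest value used on $C_s$ exceeded $c_s$, then $\sum_{v\le c_s}b_v\le|C_1|+\cdots+|C_s|-1<\sum_{v\le c_s}a_v$, contradicting $b\trianglerighteq a$. This proves the identity, hence $\fund_{(\L,\rho)}=\fund_a=\fund_{\red(\L,\rho)}$.

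\emph{Step 3: the converse.} Given a weak composition $a$, write $\flatten(a)=(\alpha_1,\dots,\alpha_r)$ with $\alpha_s$ in position $p_s$, so $p_1<p_2<\cdots<p_r$. Build $\L$ by concatenating chains $C_1C_2\cdots C_r$, where $C_s$ is a block of $\alpha_s$ consecutive integers listed increasingly and the blocks are arranged so that every entry of $C_s$ exceeds every entry of $C_{s+1}$; then ascents occur only within chains and descents only between them, so $\Des(\L)=\flatten(a)$. Set $\rho(x)=p_s$ for $x\in C_s$. A direct computation gives $\delta(\ell,x)=s'-s$ for $\ell\in C_s$, $x\in C_{s'}$, $\ell\preceq_\L x$ (in a decreasing chain one can use at most one element from each intervening block, and the blocks are decreasing), whence
\[
\rhomax{\L}(\ell)=\min_{s'\ge s}\bigl(p_{s'}-(s'-s)\bigr)=\min_{s'\ge s}(p_{s'}-s')+s=p_s,
\]
since $p_{s'}-s'$ is weakly increasing because the $p_{s'}$ strictly increase. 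Thus $\rhomax{\L}$ is constant on each $\red$-chain, the hypothesis of the proposition holds, and $\red(\L,\rho)$ has part $\alpha_s$ in position $p_s$, i.e.\ equals $a$ up to trailing zeros (which do not change the slide polynomial). By Step~2, $\fund_{(\L,\rho)}=\fund_{\red(\L,\rho)}=\fund_a$.

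\emph{Main obstacle.} I expect the real content to be the set identity in Step~2, and within it the ``$\supseteq$'' direction: one must recognize that ``$\flatten(b)$ refines $\flatten(a)$'' is exactly what lets the $\red$-chains be filled by separated blocks of values compatible with the descents, and that the dominance $b\trianglerighteq a$ is exactly the condition making every such filling respect the ceilings $\rhomax{\L}$. The monotonicity bookkeeping in Step~1 and the explicit computation of $\delta$ and $\rhomax{\L}$ for the model order in Step~3 are routine by comparison, aside from minor care about trailing zeros and the number of variables.
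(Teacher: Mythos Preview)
Your proposal is correct and follows essentially the same line as the paper's proof: compare the two chain decompositions via the monotonicity of $\rhomax{\L}$, identify $\A_\rho(\L)$ with the index set of the slide polynomial via $\comp$, and realize an arbitrary weak composition by the explicit ``reverse-block'' linear order with $\rho$ constant on each block. Your write-up is in fact more complete than the paper's in two places: you argue the ``only if'' direction of the equivalence (the paper only treats ``if''), and you verify explicitly that $\rhomax{\L}=\rho$ in the constructed model by computing $\delta$, whereas the paper simply asserts $a=\comp(\rho)=\red(\L,\rho)=\des(\L,\rho)$.
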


\begin{proof}
  We may assume $\rho = \rhomax{\L}$ and let $\comp(\rho)=\red(\rho)$. The condition on $\rho$ ensures the chains used to compute $\des(\L,\rho)$ are broken only between descents, since we never have $i \precdot j$, $i<j$, and $\rhomax{\L}(i) < \rhomax{\L}(j)$. Thus $\des(\L,\rho)=\red(\L,\rho)$.

  Let $f \in \A_{\rho}(\L)$ be a $(\L,\rho)$-partition and let $b=\comp(f)$. Then $b \trianglerighteq \comp(\rho)$. Letting $\L = \{\ell_1 \prec \ell_2 \prec \cdots \prec \ell_p \}$ so that we have
  \begin{displaymath}
    \begin{array}{ccccccc}
      f(\ell_1) & \leq & f(\ell_2) & \leq & \cdots & \leq & f(\ell_p) \\
      \rotatebox[origin=c]{90}{$\geq$} & &
      \rotatebox[origin=c]{90}{$\geq$} & & & &
      \rotatebox[origin=c]{90}{$\geq$} \\      
      \rho(\ell_1) & \leq & \rho(\ell_2) & \leq & \cdots & \leq & \rho(\ell_p)
    \end{array}
  \end{displaymath}
  notice $\ell_i < \ell_{i+1}$ implies $\rho(\ell_i) = \rho(\ell_{i+1})$, and $\ell_i > \ell_{i+1}$ implies both $f(\ell_i) < f(\ell_{i+1})$ and $\rho(\ell_i) < \rho(\ell_{i+1})$. Thus $\flatten(b)$ refines $\flatten(\comp(\rho))$. In particular, every $(\L,\rho)$-partition contributes a term of $\fund_{\comp(\rho)}=\fund_{\red(\rho)}$. Conversely, for any weak composition $b$ such that $\flatten(b)$ refines $\flatten(\comp(\rho))$ and $b \trianglerighteq \comp(\rho)$, we may construct a unique $(\L,\rho)$-partition $f$ such that $\comp(f)=b$.

  Given $a$ with $\flatten(a) = (\alpha_1,\ldots,\alpha_k)$, we may take $\L$ to be the linear order
  \[ (p-\alpha_{1}+1) \prec \cdots \prec p \prec (p - \alpha_{1} - \alpha_2 + 1) \prec \cdots \prec (p-\alpha_{1}) \prec \cdots \prec 1 \prec \cdots \prec \alpha_k . \]
  Then clearly $\Des(\L) = \alpha$. If the nonzero entries of $a$ occur at indices $r_1 < \cdots < r_k$, then we may take $\rho$ to be the restriction map such that $\rho(i) = r_j$ whenever $i$ is in the $j$th descent chain of $\L$. This ensures $a = \comp(\rho) = \red(\L,\rho) = \des(\L,\rho)$, and so, by the first statement of the proposition, $\fund_{(\L,\rho)} = \fund_{a}$ as desired.  
\end{proof}

\begin{example}\label{ex:realize} Given the weak composition $a = (0,2,4,0,0,3)$, we may realize the pair $(\L,\rho)$ for which $\fund_a = \fund_{(\L,\rho)}$ as follows. We have $\flatten(a) = (2,4,3)$ and so we will take $\L$ to be the linear order
  \[ \overbrace{8 \prec 9}^{C_1} \prec \overbrace{4 \prec 5 \prec 6 \prec 7}^{C_2} \prec \overbrace{1 \prec 2 \prec 3}^{C_3} . \]
  The nonzero entries of $a$ occur at indices $2,3,6$, and so we set $\rho$ to be
  \[ \rho(8) = \rho(9) = 2, \hspace{1em} \rho(4) = \rho(5) = \rho(6) = \rho(7) = 3, \hspace{1em} \rho(1) = \rho(2) = \rho(3) = 6 . \] 
\end{example}

Observe that the conclusion of Proposition~\ref{prop:slide} does not hold in general. 

\begin{example}\label{ex:linnonbasis} Take $\L$ to be $2 \prec 3 \prec 1$ and $\rho = (2,3,4)$. Then we have
  \begin{eqnarray*}
    \fund_{(\L,\rho)} & = & x_1^2 x_2 + x_1^2 x_3 + x_1^2 x_4 + x_1 x_2 x_3 + x_1 x_2 x_4 \\
    & & + x_1 x_3 x_4 + x_2^2 x_3 + x_2^2 x_4 + x_2 x_3 x_4 \\
    & = & \fund_{(0,2,0,1)} + \fund_{(0,1,1,1)} - \fund_{(1,1,0,1)}.
  \end{eqnarray*}
  In this case, $\rhomax{\L}=\rho$ but it is not constant over the increasing chain $2 \prec 3$. Here, $\red(\L,\rho) = (0,2,0,1)$, and indeed we see $\fund_{\red(\L,\rho)}$ appear as a term in $\fund_{(\L,\rho)}$, but there are other terms as well. Note as well the slide expansion is not positive.
\end{example}

\begin{remark}\label{rem:bouded_quasi}
  In Remark~\ref{rem:quasi} we saw that the linear extensions of $\P$-partitions span an algebra containing all $\P$-partition generating functions, and this algebra has a basis consisting of linear extensions with distinct descent compositions. Corollary~\ref{cor:fundamental} gives us a beautiful positive expansion of any $\P$-partition generating functions in terms of the fundamental basis.
  We aim to obtain similar results for $(\P,\rho)$-partitions, but already Ex~\ref{ex:linnonbasis} shows that we cannot expect such result for all $(\P,\rho)$. While Corollary~\ref{cor:ext} gives a direct analogue of Corollary~\ref{cor:fundamental} expanding positively any bounded $\P$-partitions in term of linear bounded $\P$-partition, the example shows that, unfortunately, the linear bounded $\P$-partition are not linearly independent nor positive in term of slide polynomials. In Section~\ref{sec:flagged} we will define flagged $(\P,\rho)$-partitions for which all the desired properties hold.
\end{remark}

\subsection{Fundamental Theorem for $(\P,\rho)$-partitions}
\label{sec:FTRPP}

Stanley's Fundamental Theorem of $\P$-partitions holds for $(\P,\rho)$-partitions as well.

\begin{theorem}[Fundamental Theorem of  $(\P,\rho)$-partitions]\label{lem:restricted-fundamental}
  Given a poset $\P$ on $[p]$ and a restriction $\rho$, we have
  \begin{equation}\label{e:restricted-fundamental}
    \A_{\rho}(\P) = \bigsqcup_{\L \in \mathscr{L}(\P)} \A_{\rho}(\L),
  \end{equation}
  where the disjoint union is over the set $\mathscr{L}(\P)$ of linear extensions of $\P$. 
\end{theorem}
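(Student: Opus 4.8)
The plan is to follow Stanley's original strategy for the Fundamental Theorem of $\P$-partitions and check that the restriction condition $f(i) \le \rho(i)$ is preserved throughout, so that it simply ``comes along for the ride.'' The heart of the matter is the identity $\A(\P) = \bigsqcup_{\L} \A(\L)$ of Theorem~\ref{thm:fundamental}: I would recall that this is proved by showing that any $\P$-partition $f$ determines a \emph{unique} linear extension $\L = \L(f)$ of $\P$ such that $f$ is also an $\L$-partition, and conversely every $\L$-partition for a linear extension $\L$ of $\P$ is automatically a $\P$-partition (since $\L$ refines $\prec$ and the descent conditions are compatible because a descent of $\P$ remains a descent of $\L$). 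Concretely, given $f \in \A(\P)$, one totally orders $[p]$ by declaring $i \prec_{\L} j$ whenever $f(i) < f(j)$, and, when $f(i) = f(j)$, by declaring $i \prec_{\L} j$ whenever $i < j$ in the natural order; one then checks this $\L$ is a linear extension of $\P$ and that $f \in \A(\L)$.

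First I would state that the inclusion $\A_\rho(\L) \subseteq \A_\rho(\P)$ for each linear extension $\L$ is immediate: an $(\L,\rho)$-partition is an $\L$-partition satisfying $f(i) \le \rho(i)$; by the classical result it is a $\P$-partition, and the condition $f(i) \le \rho(i)$ is literally the same condition, hence $f \in \A_\rho(\P)$. Next, for the reverse inclusion and disjointness, I would take $f \in \A_\rho(\P)$. Then in particular $f \in \A(\P)$, so by Theorem~\ref{thm:fundamental} there is a unique linear extension $\L$ of $\P$ with $f \in \A(\L)$. Since $f$ still satisfies $f(i) \le \rho(i)$ for all $i$, we get $f \in \A_\rho(\L)$. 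Uniqueness of $\L$ as a linear extension witnessing $f \in \A(\L)$ gives uniqueness as a witness of $f \in \A_\rho(\L)$ as well, so the union on the right is disjoint and covers $\A_\rho(\P)$. This establishes \eqref{e:restricted-fundamental}.

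The only subtlety — and the step I would be most careful about — is making sure there is genuinely nothing new to prove: the restriction map $\rho$ is a fixed function on the \emph{labels} $[p]$, not on chains or on the order structure, so the predicate ``$f(i) \le \rho(i)$ for all $i$'' is invariant under replacing $\P$ by any $\L \in \mathscr{L}(\P)$ or vice versa. Thus the restriction condition partitions $\A(\P)$ and $\bigsqcup_\L \A(\L)$ into corresponding pieces, and the classical bijection $\A(\P) = \bigsqcup_\L \A(\L)$ restricts to the desired bijection. I would therefore phrase the proof as: ``This follows immediately from Theorem~\ref{thm:fundamental}, since the condition $f(i) \le \rho(i)$ depends only on $f$ and not on the poset structure, hence the decomposition of $\A(\P)$ restricts to a decomposition of the subset $\A_\rho(\P) \subseteq \A(\P)$.'' If the authors prefer a self-contained argument, I would instead reproduce the explicit construction of $\L(f)$ above and verify the three defining conditions of an $(\L,\rho)$-partition directly, but I expect the short argument by restriction is what is intended, and there is no real obstacle here beyond bookkeeping.
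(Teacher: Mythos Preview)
Your argument is correct: since $\A_\rho(\P) = \{ f \in \A(\P) : f(i) \le \rho(i) \text{ for all } i\}$ and the predicate $f(i) \le \rho(i)$ depends only on $f$, intersecting both sides of Theorem~\ref{thm:fundamental} with this predicate immediately yields~\eqref{e:restricted-fundamental}.

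The paper, however, does not invoke Theorem~\ref{thm:fundamental} as a black box. Instead it argues directly by induction on the number of incomparable pairs, establishing for any incomparable $i<j$ the decomposition
\[
  \A_\rho(\P) = \A_\rho(\P_{i\prec j}) \sqcup \A_\rho(\P_{i\succ j}),
\]
and then iterating. Your route is shorter and perfectly valid for the theorem as stated, but the paper's self-contained induction has a payoff later: the intermediate identity above (Equation~\eqref{eq:Pbasic}) is exactly what is combined with Lemma~\ref{lem:flag_decomposition} in the proof of the Main Theorem~\ref{thm:main}, where one needs to know that the well-labelled property is preserved at each step of the induction, not merely that the final linear extensions appear. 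If you take the black-box route, you would still need to supply this two-term decomposition separately when proving Theorem~\ref{thm:main}.
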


\begin{proof}
  Without lost of generality, we may assume that $\rho=\rhomax{\P}$. We proceed by induction on the number of incomparable pairs $i,j$ in $\P$. If there is no such pair, then $\P$ must be a linear order, and so $\mathscr{L}(\P)=\{\P\}$ and the result follow trivially. Now assume that $\P$ has some incomparable pair. Fix $i<j$  an incomparable pair in $\P$. We construct two new posets $\P_{i\prec j}$ and $\P_{i\succ j}$ that are the transitive closure obtained by adding $i\prec j$ or  $i\succ j$ to $\P$, respectively. We will show that
  \begin{equation}\label{eq:Pbasic}
    \A_{\rho}(\P) =  \A_{\rho}(\P_{i\prec j}) \sqcup   \A_{\rho}(\P_{i\succ j}),
  \end{equation}
  a disjoint union. The result will follow from the induction hypothesis.
	
  First note that $\A_{\rho}(\P_{i\prec j}) \cap   \A_{\rho}(\P_{i\succ j})=\varnothing$. Indeed, a $(\P,\rho)_{i\prec j}$-partition $f$ satisfies $f(i)\le f(j)$ whereas a $(\P,\rho)_{i\succ j}$-partition $f$ satisfies $f(i)> f(j)$. Since any relation of $\P$ is contained in both $\P_{i\prec j}$ and $\P_{i\succ j}$, we have that if $f\in  \A_{\rho}(\P_{i\prec j}) \sqcup   \A_{\rho}(\P_{i\succ j})$, then $f\in \A_{\rho}(\P)$. The restriction $\rho$ is the same function across the three sets involved, so if has the same $\rho$-restrictions.

  Now assume you have $f\in  \A_{\rho}(\P)$ and consider the values $f(i)$ and $f(j)$. If $f(i)\le f(j)$, then $f\in  \A_{\rho}(\P_{i\prec j})$. Indeed it satisfies all the conditions imposed by $\P$ plus the additional condition imposed by $i\prec j$. Moreover the restriction imposed by $\rho$ is the same on both sets. Now if $f(i)> f(j)$, then $f\in  \A_{\rho}(\P_{i\succ j})$. Such $f$ satisfies all conditions imposed by $\P$ plus the condition imposed by $i\succ j$ and $i<j$. This concludes our proof.
\end{proof}

In particular, this gives a simple decomposition of the generating function of a poset as the sum of generating functions of all linear extensions of the poset.

\begin{corollary}\label{cor:ext}
  Given a poset $\P$ on $[p]$ and a restriction $\rho$, we have
  \begin{equation}\label{e:rectricted-Fundamental}
    \fund_{(\P,\rho)} = \sum_{\L \in \mathscr{L}(\P)} \fund_{(\L,\rho)}.
  \end{equation}  
\end{corollary}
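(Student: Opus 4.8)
\textbf{Proof proposal for Corollary~\ref{cor:ext}.} The plan is to derive the identity directly from the set-level decomposition already established in Theorem~\ref{lem:restricted-fundamental}, since the generating polynomial $\fund_{(\P,\rho)}$ is by construction a sum of monomials indexed by the elements of $\A_{\rho}(\P)$, and that index set has just been partitioned.

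First I would recall that, by definition~\eqref{e:poset-gp},
\begin{equation*}
  \fund_{(\P,\rho)} = \sum_{f \in \A_{\rho}(\P)} x_{f(1)} \cdots x_{f(p)},
\end{equation*}
and that the monomial $x_{f(1)}\cdots x_{f(p)}$ attached to a map $f\colon [p]\to\mathbb{N}$ depends only on $f$ itself, not on the particular poset structure on $[p]$ through which we are viewing it. Hence the contribution of $f$ is the same whether $f$ is regarded as a $(\P,\rho)$-partition or, for a linear extension $\L\in\mathscr{L}(\P)$ with $f\in\A_{\rho}(\L)$, as an $(\L,\rho)$-partition. Next I would invoke Theorem~\ref{lem:restricted-fundamental}, which gives the disjoint union $\A_{\rho}(\P) = \bigsqcup_{\L \in \mathscr{L}(\P)} \A_{\rho}(\L)$; because this union is disjoint, summing the monomials over $\A_{\rho}(\P)$ is the same as summing over each block $\A_{\rho}(\L)$ separately and then adding, with no monomial double-counted and none omitted. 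Splitting the sum accordingly yields
\begin{equation*}
  \fund_{(\P,\rho)} = \sum_{\L \in \mathscr{L}(\P)} \ \sum_{f \in \A_{\rho}(\L)} x_{f(1)} \cdots x_{f(p)} = \sum_{\L \in \mathscr{L}(\P)} \fund_{(\L,\rho)},
\end{equation*}
where the inner sum is recognized as $\fund_{(\L,\rho)}$ again by~\eqref{e:poset-gp} applied to the poset $\L$.

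There is essentially no obstacle here: the entire content lies in Theorem~\ref{lem:restricted-fundamental}, and Corollary~\ref{cor:ext} is the routine passage from a disjoint union of index sets to the corresponding sum of generating functions. The only point worth stating explicitly — and the one I would make sure to mention — is the observation above that the weight $x_{f(1)}\cdots x_{f(p)}$ is intrinsic to $f$, so that the weights are genuinely compatible across the decomposition and the term-by-term matching is legitimate.
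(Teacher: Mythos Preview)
Your proposal is correct and is precisely the argument the paper has in mind: the corollary is stated without proof immediately after Theorem~\ref{lem:restricted-fundamental}, as the routine passage from the disjoint-union decomposition of $\A_{\rho}(\P)$ to the corresponding sum of generating polynomials. Your explicit remark that the monomial $x_{f(1)}\cdots x_{f(p)}$ depends only on $f$ and not on the ambient poset is the one point worth saying, and you say it.
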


\begin{remark}
 The above corollary gives an efficient algorithm to program the polynomials $\fund_{(\P,\rho)}$, especially in packages like MathSage that
 contains modules for orders and linear extension. But again, be aware that this does not, in general, give a decomposition into the slide polynomial basis.
\end{remark}

%
\section{Flagged $(\P,\rho)$-partitions}
%
\label{sec:flagged}

Assaf and Searles \cite[Theorem~3.9]{AS17} showed that fundamental slide polynomials form a basis of polynomials, and Proposition~\ref{prop:slide} shows that slides polynomial are $(\L,\rho)$-partitions for some $\rho$ and $\L$. However, Ex.~\ref{ex:linnonbasis} shows that for some restrictions $\rho$ on linear orders we do not get single slide polynomials, and, in fact, the expansion in terms of slides polynomials may not be positive. In this section we are interested in an important subfamilies of  $(\P,\rho)$-partitions that will give positivity results when expanded in the basis of slide polynomials.

\subsection{Flagged $(\P,\rho)$-partitions} 
\label{sec:mainproof}

The subfamily of $(\P,\rho)$-partitions of interest includes the \emph{flagged Schur functions} defined by Lascoux and Sch\"{u}tzenberger~\cite{LS82} and studied further by Wachs~\cite{wachs82}. This motivates calling the elements of this subfamily \emph{flagged} $(\P,\rho)$-partitions.

\begin{definition}\label{def:P-flag}
 For a poset $\P$, a restriction $\rho$ is a \emph{$\P$-flag} if $\rho(i)>0$ for all $i\in[p]$,  
  \begin{enumerate}
  \item[(i)] if $i \precdot j$ and $i<j$, then $\rho(i) = \rho(j)$, and
  \item[(ii)] if $i \precdot j$ and $i > j$, then $\rho(i) \le \rho(j)$.
  \end{enumerate}
  We call $\A_\rho(\P)$ the set of \emph{flagged $(\P,\rho)$-partitions} to emphasize that $\rho$ is a $\P$-flag.
\end{definition}

We see in Proposition~\ref{prop:slide} that the slide polynomials are exactly the generating functions of flagged $(\L,\rho)$-partitions for $\L$ a linear order.
To prove our main theorem, we introduce a second family of $(\P,\rho)$-partitions that we call \emph{well-labelled}.

\begin{definition}\label{def:well-label}
 For a poset $\P$ on [p] and a restriction $\rho$, we say that $(\P,\rho)$ is \emph{well-labelled}  if
  \begin{enumerate}
  \item[(i)] if $i \precdot j$ and $i<j$, then $\rho(i) \ge \rho(j)$, 
  \item[(ii)] if $i \precdot j$ and $i > j$, then $\rho(i) \le \rho(j)$, and
  \item[(iii)] for all incomparable $i,j\in \P$, if $ i<j$, then $\rho(i) \geq \rho(j)$.
  \end{enumerate}
\end{definition}

The following result shows that every $\P$-flag $\rho$ generates a polynomial equivalent to a polynomial obtained by a well-labelled pair $(\P',\rho')$.

\begin{proposition}\label{prop:well}
  Given poset $\P$ on [p] and a $\P$-flag $\rho$, there exists a permutation $\pi\colon [p]\to[p]$ such that 
  \begin{enumerate}
  \item  $\rho\circ\pi$  is a $\pi^{-1}(\P)$-flag,
  \item  $\big(\pi^{-1}(\P),\rho\circ\pi\big)$ is well-labelled, and
  \item  $\A_{\rho}(\P)=\A_{\rho\circ\pi}(\pi^{-1}(\P))$.
  \end{enumerate}
\end{proposition}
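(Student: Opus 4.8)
The plan is to build the permutation $\pi$ so that it relabels the ground set $[p]$ in a way that is order-isomorphic to $\P$ but replaces the labels by new ones chosen to be compatible with $\rho$. Concretely, I would first observe that a $\P$-flag $\rho$ is essentially a $\P$-increasing function (after replacing $\rho$ by $\rhomax{\P}$, which by Proposition~\ref{prop:max-rho} does not change $\A_\rho(\P)$); condition (i) of Definition~\ref{def:P-flag} forces $\rho$ to be constant on increasing covers and condition (ii) forces $\rho$ to weakly increase along decreasing covers, so in fact $\rho$ is weakly monotone along every cover, hence along every chain, i.e. $i \prec j$ implies $\rho(i) \le \rho(j)$. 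The key combinatorial input is then: I want a total order on $[p]$, realized as a permutation $\pi$, such that the relabeling by $\pi$ converts the ``$\rho$ weakly increasing up'' picture into the ``labels weakly decreasing up'' picture of Definition~\ref{def:well-label}. So I would sort the elements of $[p]$ first by the value of $\rho$ in decreasing order and, within each fiber $\rho^{-1}(v)$, by the original integer order; letting $\pi$ send the new position to the old element, set $\P' = \pi^{-1}(\P)$ and $\rho' = \rho \circ \pi$.

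The second step is to verify the three claimed properties for this choice of $\pi$. For (3), $\A_\rho(\P) = \A_{\rho'}(\P')$ holds essentially formally: $\pi$ is an order isomorphism $\P' \to \P$, so $f \mapsto f \circ \pi$ is a bijection between $\P$-partitions and $\P'$-partitions, and it carries the constraint $f(i) \le \rho(i)$ to $f'(k) \le \rho'(k)$ — but here I must be careful, because the $\P$-partition axioms (2) and Definition~\ref{def:P-flag}(ii) refer to the \emph{integer} order on the labels, not just to the poset structure, so I need to confirm that relabeling preserves which covers are ``descents''. This is where the choice of $\pi$ matters: I should check that $i \precdot j$ with $i > j$ in $\P$ corresponds under $\pi$ to a cover $k' \precdot l'$ with $k' > l'$ in $\P'$, and likewise for ascents. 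Since along an increasing cover $\rho$ is constant and within a $\rho$-fiber $\pi$ respects the integer order, an increasing cover of $\P$ maps to an increasing cover of $\P'$; since along a decreasing cover $\rho$ strictly increases (note $i \precdot j$, $i>j$ with $\rho(i)=\rho(j)$ would, after passing to $\rhomax{\P}$, contradict $\rhomax{\P}(i) < \rhomax{\P}(j)$ from the remark before Lemma~\ref{lem:ref_des}) — or at least weakly increases, and in the equality case the integer order is again preserved within the fiber — a decreasing cover of $\P$ maps to a decreasing cover of $\P'$. Hence $\rho'$ satisfies Definition~\ref{def:P-flag}(i)--(ii) relative to $\P'$, giving (1), and for (2) I check the three conditions of well-labelledness: (i) and (ii) are inherited from (1) together with the monotonicity of $\rho$ along covers just discussed, and (iii) — for incomparable $k' < l'$ in $\P'$, $\rho'(k') \ge \rho'(l')$ — is immediate from the sorting, since $k' < l'$ as integers means $k'$ was placed at an earlier position, i.e. in a fiber with $\rho$-value at least as large.

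The main obstacle I anticipate is the bookkeeping around the distinction between the poset order $\prec$ and the integer order $<$ on labels, since ``descent'' is defined by the interaction of the two and the entire construction of $\pi$ is designed to rearrange the integer labels while fixing the poset structure. In particular I need to rule out the degenerate possibility that $\rho$ is constant across a decreasing cover (which is why I would reduce to $\rho = \rhomax{\P}$ first and invoke strict monotonicity of $\rhomax{\P}$ across descents), because otherwise the relabeling could turn a descent into a non-descent and break property (3). Once that point is pinned down, the rest is a routine check that sorting-by-$\rho$-then-by-label produces exactly the inequalities in Definitions~\ref{def:P-flag} and~\ref{def:well-label}.
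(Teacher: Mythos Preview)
Your proposal is correct and follows essentially the same strategy as the paper: build $\pi$ so that elements with larger $\rho$-value receive smaller labels, then verify that ascents and descents of covers are preserved under the relabeling. The paper implements this by decomposing the Hasse diagram into the connected components $M_1,\dots,M_\ell$ obtained after deleting all descent covers (on each of which $\rho$ is constant), ordering these components by $\rho$-value together with a linear-extension condition, and listing each component in increasing integer order. Your construction sorts directly by $\rho$-fiber and then by integer label, which is a mild simplification; since ascending covers always lie inside a single $\rho$-fiber, while descending covers either cross fibers (so the larger label lands later automatically) or stay inside a fiber (where your integer tie-break handles them), you recover the same ascent/descent preservation without invoking the component structure or the linear-extension bookkeeping.

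One unnecessary detour: you do not need to pass to $\rhomax{\P}$ to force strict increase of $\rho$ across descent covers. As you yourself note in the parenthetical ``or at least weakly increases, and in the equality case the integer order is again preserved within the fiber'', the weak inequality $\rho(i)\le\rho(j)$ from Definition~\ref{def:P-flag}(ii) is already enough: if equality holds, $i>j$ within a fiber gives $\pi^{-1}(i)>\pi^{-1}(j)$ directly. So the appeal to Proposition~\ref{prop:max-rho} and the remark before Lemma~\ref{lem:ref_des} (which is stated only for linear orders anyway) can be dropped, and the argument runs cleanly with the original $\rho$.
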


\begin{proof}
  The Hasse diagram $H(\P)$ of $\P$ is the directed graph on $[p]$ where we have an edge  $i\to j$ for each cover $i\precdot j$. Consider the connected component $M_1, M_2, \ldots , M_\ell$ of the directed graph $H^+(\P)$ obtained from $H(\P)$ by removing every edges $i\precdot j$ where $i>j$. The order $\P$ induces a partial order on these connected components, and since $\rho$ is a $\P$-flag, it must be constant on each connected component. Therefore we may re-index the components as follows. Setting $m_i=\rho(x)$ for any $x\in M_i$, we may assume the $M_i$'s are indexed in such a way that $m_1\ge m_2\ge\cdots\ge m_\ell$ and $M_\ell\cdots M_2M_1$ is a linear extension of the $\P$-induced order on the $M_i$'s. 

  Define a permutation $\pi$ on $[p]$ such that for $1 \leq k \leq \ell$ we have
  \begin{itemize}
  \item[(a)] $\{\pi(|M_1|+\cdots +|M_{k-1}|+1), \ldots, \pi(|M_1|+\cdots +|M_{k}|)\} = M_k$, and
  \item[(b)] $\pi(|M_1|+\cdots +|M_{k-1}|+1) < \cdots < \pi(|M_1|+\cdots +|M_{k}|)$.
  \end{itemize}
  We claim $\pi$ has the desired properties. To see that $\rho\circ\pi$ is a $\pi^{-1}(\P)$-flag, suppose $i \precdot_{\pi^{-1}(\P)} j$ or, equivalently, $\pi(i) \precdot_{\P} \pi(j)$. If $\pi(i)<\pi(j)$, then $\pi(i)$ and $\pi(j)$ are the same component $M_k$, and so we must have $i<j$ as well by (a) and (b). Conversely, if $\pi(i)>\pi(j)$, then since $\rho$ is a $\P$-flag, we have $\rho(\pi(i)) \le \rho(\pi(j))$ placing $\pi(i)$ in a later component than $\pi(j)$ by our indexing choice, and so $i>j$. 
  
  To see that $\pi^{-1}(\P)$ is well-labelled by $\rho\circ\pi$, notice conditions (i) and (ii) of Definition~\ref{def:P-flag} imply conditions (i) and (ii) of Definition~\ref{def:well-label}. For (iii), suppose $i$ and $j$ are incomparable in $\pi^{-1}(\P)$ with $i<j$. If $\pi(i),\pi(j)$ are in the same component $M_k$, then  $\rho\circ\pi(i) = \rho\circ\pi(j)$. Otherwise, since $i<j$, we must have $\pi(i)$ in an earlier, and thus greater, component than $\pi(j)$, and so $\rho\circ\pi(i) \ge \rho\circ\pi(j)$, as desired.

  Finally, to see that the partition generating functions coincide, regarding the nodes of $\P$ as fixed, as we permute their labels with $\pi^{-1}$, we correspondingly permute the restrictions so that the latter remain fixed with each node. Thus the set of $(\P,\rho)$-partitions is identical to that of $(\pi^{-1}(\P),\rho\circ\pi)$-partitions.  
\end{proof}

\begin{example}\label{ex:orderings}
  Consider the poset $\P$ in Fig.~\ref{fig:orderingMs}. The connected components of $H^+(\P)$ are $M_1=\{1,2,5\}$, $M_2=\{3,6\}$, $M_3=\{4\}$ and $M_4=\{7\}$. The  $\P$-flag $\rho$ is constant on each component with values $m_1=4$, $m_2=4$, $m_3=2$ and $m_4=2$. The indexing is valid since $4\ge 4 \ge 2\ge 2$ and  $M_4M_3M_2M_1$ is a linear extension of the $\P$-induced order on the components. The permutation $\pi$ is given by
  \[ \pi = \left(\begin{array}{ccccccc}
     1 & 2 & 3 & 4 & 5 & 6 & 7 \\
     1 & 2 & 5 & 3 & 6 & 4 & 7
  \end{array} \right) \]
  with result that $\pi^{-1}(\P)$ is well-labelled by $\rho \circ \pi$, as seen in Fig.~\ref{fig:orderingMs}.
\end{example}

\begin{figure}[ht]
  \begin{center}
    \begin{tikzpicture}[
        roundnode/.style={circle, draw=black, thick, minimum size=1ex},
        label/.style={%
          postaction={ decorate,transform shape,
            decoration={ markings, mark=at position .5 with \node #1;}}}]
      \node[roundnode] at (0,3)   (P5) {$5$};
      \node[roundnode] at (1,1.5) (P1) {$1$};
      \node[roundnode] at (2,0)   (P7) {$7$};
      \node[roundnode] at (2,3)   (P2) {$2$};
      \node[roundnode] at (3,1)   (P4) {$3$};
      \node[roundnode] at (3,2)   (P6) {$6$};
      \node[roundnode] at (4,0)   (P3) {$4$};
      \node[left  = 1pt of P1] {\Blue{$\scriptstyle 4\geq$}};
      \node[right = 1pt of P2] {\Blue{$\scriptstyle\leq 4$}};
      \node[right = 1pt of P3] {\Blue{$\scriptstyle\leq 2$}};
      \node[right = 1pt of P4] {\Blue{$\scriptstyle\leq 4$}};
      \node[left  = 1pt of P5] {\Blue{$\scriptstyle 4\geq$}};
      \node[right = 1pt of P6] {\Blue{$\scriptstyle\leq 4$}};
      \node[left  = 1pt of P7] {\Blue{$\scriptstyle 2\geq$}};
      \draw[thick] (P5) -- (P1) ;
      \draw[thick] (P1) -- (P2) ;
      \draw[thin,densely dotted,red] (P4) -- (P3) ;
      \draw[thick] (P6) -- (P4) ;
      \draw[thin,densely dotted,red] (P2) -- (P6) ;
      \draw[thin,densely dotted,red] (P1) -- (P7) ;
      \draw[thin,densely dotted,red] (P4) -- (P7) ;
      \node[roundnode] at (6,3)   (Q5) {$3$};
      \node[roundnode] at (7,1.5) (Q1) {$1$};
      \node[roundnode] at (8,0)   (Q7) {$7$};
      \node[roundnode] at (8,3)   (Q2) {$2$};
      \node[roundnode] at (9,1)   (Q4) {$4$};
      \node[roundnode] at (9,2)   (Q6) {$5$};
      \node[roundnode] at (10,0)   (Q3) {$6$};
      \node[left  = 1pt of Q1] {\Blue{$\scriptstyle 4\geq$}};
      \node[right = 1pt of Q2] {\Blue{$\scriptstyle\leq 4$}};
      \node[right = 1pt of Q3] {\Blue{$\scriptstyle\leq 2$}};
      \node[right = 1pt of Q4] {\Blue{$\scriptstyle\leq 4$}};
      \node[left  = 1pt of Q5] {\Blue{$\scriptstyle 4\geq$}};
      \node[right = 1pt of Q6] {\Blue{$\scriptstyle\leq 4$}};
      \node[left  = 1pt of Q7] {\Blue{$\scriptstyle 2\geq$}};
      \draw[thick] (Q5) -- (Q1) ;
      \draw[thick] (Q1) -- (Q2) ;
      \draw[thin,densely dotted,red] (Q4) -- (Q3) ;
      \draw[thick] (Q6) -- (Q4) ;
      \draw[thin,densely dotted,red] (Q2) -- (Q6) ;
      \draw[thin,densely dotted,red] (Q1) -- (Q7) ;
      \draw[thin,densely dotted,red] (Q4) -- (Q7) ;
    \end{tikzpicture}
  \end{center}
  \caption{\label{fig:orderingMs} An example of a poset $\P$ together with a $\P$-flag $\rho$ (left) and the poset $\pi^{-1}(\P)$ well-labelled by $\rho\circ\pi$ (right).}
\end{figure}

In view of Proposition~\ref{prop:well}, to prove our Main Theorem~\ref{thm:main}, we can always assume that we have a well-labelled flagged $(\P,\rho)$. The next result is the heart of the proof; it relies on the Fundamental Theorem~\ref{lem:restricted-fundamental} for $(\P,\rho)$-partitions.

\begin{lemma}\label{lem:flag_decomposition}
  Let $\P$ be a poset on $[p]$ and $\rho$ a map such that $(\P,\rho)$ is well-labelled. For any incomparable pair $i,j\in[p]$, let $\P_1 = \P_{i\prec j}$ and $\P_2 = \P_{i\succ j}$ be the transitive closure obtained by adding $i\prec j$ or  $i\succ j$ to $\P$, respectively. Then $(\P_1,\rho)$ and $(\P_2,\rho)$ are both well-labelled. 
\end{lemma}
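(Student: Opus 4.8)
The plan is to show that passing from $\P$ to $\P_1$ (or to $\P_2$) perturbs the Hasse diagram in a completely controlled way: the \emph{only} cover relation that is created is the one we added, and every other cover of $\P_1$ (resp.\ $\P_2$) was already a cover of $\P$. Once this is established, each of the three defining conditions of a well-labelled pair is verified in a line. We may assume without loss of generality that $i<j$, so that $\P_1$ adjoins the relation $i\prec j$ and $\P_2$ adjoins $j\prec i$; the case $i>j$ is handled by swapping the names of $i$ and $j$, which merely interchanges $\P_1$ and $\P_2$. The first thing I would record is the explicit description of the enlarged order: since $i,j$ are incomparable in $\P$ and $\P$ is already transitively closed, the new edge can be traversed at most once in any path (traversing it twice would force $j\preceq_\P i$), so
$$ x\prec_{\P_1} y \iff x\prec_\P y \ \text{ or }\ \bigl(x\preceq_\P i \text{ and } j\preceq_\P y\bigr), $$
and symmetrically for $\P_2$ with $i$ and $j$ interchanged on the right-hand side.

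The key step — and the only part that is not bookkeeping — is the claim that $i\precdot_{\P_1} j$ is a cover of $\P_1$ and is the only cover of $\P_1$ not already present in $\P$. For the first assertion, any $c$ with $i\prec_{\P_1} c\prec_{\P_1} j$ would, after expanding via the displayed equivalence, force one of $i\prec_\P j$, $i\prec_\P i$, $j\prec_\P j$, or $j\preceq_\P i$, each impossible. For the second, suppose $x\precdot_{\P_1} y$ with $x\not\prec_\P y$; then $x\preceq_\P i$ and $j\preceq_\P y$, and inserting $c=i$ (when $x\neq i$) or $c=j$ (when $y\neq j$) yields a strict chain $x\prec_{\P_1} c\prec_{\P_1} y$, contradicting that $x\precdot_{\P_1} y$ is a cover; hence $x=i$ and $y=j$. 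Finally, if $x\prec_\P y$ and $x\precdot_{\P_1} y$, then $x\precdot_\P y$, since any $z$ with $x\prec_\P z\prec_\P y$ in $\P$ survives in $\P_1$. The identical argument, with $i$ and $j$ interchanged, shows the only new cover of $\P_2$ is $j\precdot_{\P_2} i$.

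With this structural fact in hand the proof concludes quickly. Condition (iii) is inherited by $\P_1$ and $\P_2$ because adjoining a relation can only remove incomparable pairs, so the incomparable pairs of $\P_1$ (resp.\ $\P_2$) are a subset of those of $\P$. Conditions (i) and (ii) hold for every cover of $\P_1$ that comes from $\P$, since these conditions depend only on the two labels and on $\rho$, not on the ambient poset. It remains to check the single new cover: in $\P_1$ it is $i\precdot j$ with $i<j$, so condition (i) asks for $\rho(i)\ge\rho(j)$, which is exactly condition (iii) of the well-labelled pair $(\P,\rho)$ applied to the incomparable pair $i<j$; in $\P_2$ it is $j\precdot i$ with $j>i$, so condition (ii) asks for $\rho(j)\le\rho(i)$, again supplied by condition (iii) of $(\P,\rho)$. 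This proves both $(\P_1,\rho)$ and $(\P_2,\rho)$ are well-labelled.

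The step I expect to require the most care is the combinatorial claim in the second paragraph — that no unexpected cover relations appear when the transitive closure is taken — and in particular the bookkeeping that the two "insert $c$" arguments genuinely pin down $x=i$ and $y=j$ separately; everything after that is a direct appeal to the well-labelled hypotheses for $\P$.
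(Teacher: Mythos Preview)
Your proof is correct and follows the same approach as the paper's: both assume $i<j$, observe from condition~(iii) that $\rho(i)\ge\rho(j)$, verify that $i\precdot j$ is the only new cover in $\P_1$ (and $j\precdot i$ in $\P_2$), and then check the three well-labelled conditions against that single new cover. Your argument is in fact more careful than the paper's, which asserts ``for all other pairs of elements in $\P_1$ the conditions of being well labelled are already satisfied from $\P$'' without the detailed cover analysis you supply in your second paragraph.
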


\begin{proof}
Since $i$ and $j$ are incomparable in $\P$ with $i<j$, and $(\P,\rho)$ is well-labeled, we must have $\rho(i) \geq \rho(j)$. 
We claim $i \precdot j$ in $\P_1 = \P_{i\prec j}$, since otherwise any intermediate $k$ would already exist in $\P$, contradicting the incomparability of $i$ and $j$. 
Therefore $i \precdot j$, $i<j$ and $\rho(i) \geq \rho(j)$, hence condition (i) of Definition~\ref{def:well-label} is satisfied. For all other pairs of elements
in $\P_1$ the conditions of being well labelled are already satisfied from $\P$. For $\P_2$, the situation is similar, we have $j\precdot i$, $j>i$ and $\rho(j) \le \rho(i)$, and condition (ii) of Definition~\ref{def:well-label} is satisfied. It  is the only pair we need to check and the rest follows.
\end{proof}

\begin{lemma}\label{lem:well-labelledL}
  Let $\L$ be a linear order on $[p]$ and $\rho$ a map such that $(\L,\rho)$ is well-labelled. 
  We have that $\fund_{(\L,\rho)}=\fund_{(\L,\rhomax{\L})}$ is a slide polynomial or zero.
\end{lemma}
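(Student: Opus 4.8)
The plan is to reduce to Proposition~\ref{prop:slide} by checking that a well-labelled linear order $(\L,\rho)$ automatically satisfies the hypothesis there. First I would invoke Proposition~\ref{prop:max-rho} to replace $\rho$ by $\rhomax{\L}$, so that without loss of generality $\rho = \rhomax{\L}$ and $\rho$ is weakly increasing along $\L$; by Proposition~\ref{prop:max-rho} this does not change $\A_\rho(\L)$ nor $\fund_{(\L,\rho)}$. Recall that $\rhomax{\L}$ is also a $(\L,\rho)$-partition whenever $\A_\rho(\L)\neq\varnothing$; if instead $\rhomax{\L}(i)\le 0$ for some $i$, then $\fund_{(\L,\rho)}=0$ and we are done.

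The key point to establish is: if $(\L,\rho)$ is well-labelled, then for every cover $i\precdot j$ in $\L$ with $i<j$ we have $\rhomax{\L}(i)=\rhomax{\L}(j)$. Condition~(i) of Definition~\ref{def:well-label} applied to the original $\rho$ gives $\rho(i)\ge\rho(j)$ when $i<j$, but I actually need the same inequality for $\rhomax{\L}$, together with the reverse inequality. The reverse inequality $\rhomax{\L}(i)\le\rhomax{\L}(j)$ is immediate for any cover $i\precdot j$, since $\rhomax{\L}$ is weakly increasing along $\L$ and $i$ precedes $j$. For the forward inequality, I would argue directly from the defining formula \eqref{e:max-rho}: $\rhomax{\L}(i)=\min\{\rho(x)-\delta(i,x)\mid i\preceq x\}$. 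Given $x\succeq j$ (hence $x\succeq i$), since $i<j$ the relation $i\prec j$ is \emph{not} a descent, so any maximal descent chain from $j$ to $x$ is also a maximal descent chain from $i$ to $x$ of the same length, giving $\delta(i,x)=\delta(j,x)$; while for $x = i$ itself we have $\rho(i)-\delta(i,i)=\rho(i)\ge\rho(j)\ge\rhomax{\L}(j)$ using well-labelledness. Taking the minimum over all $x\succeq i$ and comparing termwise with the minimum defining $\rhomax{\L}(j)$ yields $\rhomax{\L}(i)\ge\rhomax{\L}(j)$. Combining the two inequalities gives $\rhomax{\L}(i)=\rhomax{\L}(j)$.

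Once this is known, the hypothesis of Proposition~\ref{prop:slide} holds for $(\L,\rhomax{\L})$ (it is even more directly checked that $\rhomax{\L}$ already equals its own maximal $\L$-increasing partition, so one may apply Proposition~\ref{prop:slide} with $\rho = \rhomax{\L}$), and we conclude $\fund_{(\L,\rho)}=\fund_{(\L,\rhomax{\L})}=\fund_{\red(\L,\rhomax{\L})}$, a single slide polynomial. Together with the vanishing case handled above, this proves $\fund_{(\L,\rho)}$ is a slide polynomial or zero.

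The main obstacle I anticipate is the case analysis in the defining formula for $\rhomax{\L}$: one must be careful that adding the index $x=i$ to the minimization (beyond the indices $x\succeq j$ that already appear in $\rhomax{\L}(j)$) cannot decrease the value below $\rhomax{\L}(j)$, which is exactly where condition~(i) of well-labelledness (giving $\rho(i)\ge\rho(j)$) is used, and that $\delta(i,x)=\delta(j,x)$ for $x\succeq j$ relies crucially on $i\prec j$ not being a descent edge. Everything else is a direct appeal to Propositions~\ref{prop:max-rho} and~\ref{prop:slide}.
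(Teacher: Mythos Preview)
Your overall plan matches the paper's exactly: dispose of the vanishing case via Proposition~\ref{prop:max-rho}, then check the hypothesis of Proposition~\ref{prop:slide} by showing $\rhomax{\L}$ is constant on each ascending run of $\L$. However, your justification of the key inequality $\rhomax{\L}(i)\ge\rhomax{\L}(j)$ (for a cover $i\precdot j$ with $i<j$) contains an error. You assert $\delta(i,x)=\delta(j,x)$ for every $x\succeq j$, reasoning that ``any maximal descent chain from $j$ to $x$ is also a maximal descent chain from $i$ to $x$ of the same length.'' That is false: a chain starting at $j$ does not start at $i$, and prepending the step $i\prec j$ destroys the descent condition since $i<j$. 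In general one only has $\delta(i,x)\le\delta(j,x)$; for instance, in the linear order $2\prec 4\prec 3\prec 1$ with $i=2$, $j=4$, $x=1$ one computes $\delta(2,1)=1<2=\delta(4,1)$. The correct argument runs in the opposite direction: given a maximal decreasing chain $i>i_1>\cdots>i_k=x$ from $i$, the first step forces $i_1\succ j$ in $\L$, and $i<j$ together with $i>i_1$ gives $j>i_1$, so $j>i_1>\cdots>i_k=x$ is a decreasing chain from $j$ of the same length. Fortunately the weaker inequality $\delta(i,x)\le\delta(j,x)$ already gives $\rho(x)-\delta(i,x)\ge\rho(x)-\delta(j,x)$, which is precisely what your termwise comparison needs, so the conclusion $\rhomax{\L}(i)\ge\rhomax{\L}(j)$ (and hence equality) survives once the direction is fixed.

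The paper's own proof avoids the formula~\eqref{e:max-rho} altogether: it simply observes that well-labelledness forces $\rho$ to be weakly decreasing on each ascending run, whence $\rhomax{\L}$---being the maximal $(\L,\rho)$-partition and in particular weakly increasing along $\L$---is constant on those runs, and Proposition~\ref{prop:slide} applies.
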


\begin{proof}
  If $\rhomax{\L}$ has any values $\le 0$, then we have $\fund_{(\L,\rhomax{\L})}=0$. 
  Now,  since $\rho$ is weakly decreasing on segments of $\L$ that do not have descents, $\rhomax{\L}$ will be constant on those same segment. The lemma then follow from Proposition~\ref{prop:slide}. 
\end{proof}

\begin{proof}[Proof of Main Theorem~\ref{thm:main}]
  Given a poset $\P$ on $[p]$ and a $\P$-flag $\rho$, by Proposition~\ref{prop:well} we may transform this to a well-labelled and flagged $(\P',\rho')$ such that
  $$\fund_{(\P,\rho)}=\fund_{(\P',\rho')}.$$
  We then use  the Fundamental Theorem~\ref{lem:restricted-fundamental} for $(\P,\rho)$-partitions.  In the proof of Theorem~\ref{lem:restricted-fundamental}, we recursively use Equation~\eqref{eq:Pbasic}. Lemma~\ref{lem:flag_decomposition} guaranties that at each step, we get well-labelled
  pair. Hence by induction on the number of incomparable pairs we get that
  $$\fund_{(\P,\rho)}=\fund_{(\P',\rho')}=\sum_{\L \in \mathscr{L}(\P)} \fund_{(\L,\rho)}$$
  with each pair $(\L,\rho)$ in the sum well-labelled. Therefore Theorem~\ref{thm:main} follows from Lemma~\ref{lem:well-labelledL}.
\end{proof}

\begin{example}\label{ex:slide-P}
  Let $\P$ be the poset depicted on the left side of Fig.~\ref{fig:slide2}, and let $\rho$ be the restriction map $(\rho(1),\rho(2),\rho(3)) = (4,3,2)$. 
  The pair $(\P,\rho)$ is well labelled and flagged.
  Take two incomparable elements in $\P$, say $1$ and $2$. We add the relation $1\prec 2$ or 
$2\prec 1$ obtaining the order $\P_1$ and $\P_2$ in Fig.~\ref{fig:slide2}. We have  that $(\P_1,\rho)$ and $(\P_2,\rho)$ are well labelled.
We can repeat the process for  $\P_2$ and get two well labelled pairs $(\L_1,\rho)$ and $(\L_2,\rho)$.
The order $\P_1$, $\L_1$ and $\L_2$ are linear order, we compute $\rhomax{\L}$ for each
(see Figure~\ref{fig:slide3}). We get that
  \[ \fund_{(\P,\rho)} = \fund_{(0,1,2,0)} + \fund_{(0,1,1,1)} + \fund_{(0,2,0,1)} ,\]
  where the right hand side are slide polynomials. 
  \end{example}

\begin{figure}[ht]
  \begin{center}
    \begin{tikzpicture}[
        roundnode/.style={circle, draw=black, thick, minimum size=1ex},
        label/.style={%
          postaction={ decorate,transform shape,
            decoration={ markings, mark=at position .5 with \node #1;}}}]
      \node at (-0.5,1) {$\P$};
      \node[roundnode] at (0,2)   (P1) {$1$};
      \node[roundnode] at (0,0)   (P2) {$3$};
      \node[roundnode] at (1,1)   (P3) {$2$};
      \node[right = 1pt of P1] {\Blue{$\scriptstyle\leq 4$}};
      \node[right = 1pt of P2] {\Blue{$\scriptstyle\leq 2$}};
      \node[right = 1pt of P3] {\Blue{$\scriptstyle\leq 3$}};
      \draw[thin] (P1) -- (P2) ;
      \node at (3.5,1) {$\P_1:$};
      \node[roundnode] at (4.5,1)   (L1) {$1$};
      \node[roundnode] at (4.5,0)   (L2) {$3$};
      \node[roundnode] at (4.5,2)   (L3) {$2$};
      \node[right = 1pt of L1] {\Red{\xcancel{\Blue{$\scriptstyle\leq 4$}} $\scriptstyle\leq 3$}};
      \node[right = 1pt of L2] {\Blue{$\scriptstyle\leq 2$}};
      \node[right = 1pt of L3] {\Blue{$\scriptstyle\leq 3$}};
      \draw[thin] (L3) -- (L1) ;      
      \draw[thin] (L1) -- (L2) ;      
      \node at (7,1) {$\P_2:$};
      \node[roundnode] at (8.5,2)   (PP1) {$1$};
      \node[roundnode] at (7.5,0)   (PP2) {$3$};
      \node[roundnode] at (9.5,0)   (PP3) {$2$};
      \node[right = 1pt of PP1] {\Blue{$\scriptstyle\leq 4$}};
      \node[right = 1pt of PP2] {\Blue{$\scriptstyle\leq 2$}};
      \node[right = 1pt of PP3] {\Blue{$\scriptstyle\leq 3$}};
      \draw[thin] (PP1) -- (PP3) ;      
      \draw[thin] (PP1) -- (PP2) ;      
    \end{tikzpicture}
  \end{center}
  \caption{\label{fig:slide2} An example of decomposition $ \A_{\rho}(\P) =  \A_{\rho}(\P_1) \sqcup   \A_{\rho}(\P_2)$ for
 well-labelled flagged $(\P,\rho)$-partition. The pairs $(\P_1,\rho)$  and  $(\P_2,\rho)$ are well labelled.}
\end{figure}

\begin{figure}[ht]
  \begin{center}
    \begin{tikzpicture}[
        roundnode/.style={circle, draw=black, thick, minimum size=1ex},
        label/.style={%
          postaction={ decorate,transform shape,
            decoration={ markings, mark=at position .5 with \node #1;}}}]
      \node at (0.5,1) {$\P_1:$};
      \node[roundnode] at (1.5,1)   (L1) {$1$};
      \node[roundnode] at (1.5,0)   (L2) {$3$};
      \node[roundnode] at (1.5,2)   (L3) {$2$};
      \node[right = 1pt of L1] {\Red{\xcancel{\Blue{$\scriptstyle\leq 4$}} $\scriptstyle\leq 3$}};
      \node[right = 1pt of L2] {\Blue{$\scriptstyle\leq 2$}};
      \node[right = 1pt of L3] {\Blue{$\scriptstyle\leq 3$}};
      \draw[thin] (L3) -- (L1) ;      
      \draw[thin] (L1) -- (L2) ;          
      \node at (4,1) {$\L_1:$};
      \node[roundnode] at (5,2)   (LL1) {$1$};
      \node[roundnode] at (5,0)   (LL2) {$3$};
      \node[roundnode] at (5,1)   (LL3) {$2$};
      \node[right = 1pt of LL1] {\Blue{$\scriptstyle\leq 4$}};
      \node[right = 1pt of LL2] {\Blue{$\scriptstyle\leq 2$}};
      \node[right = 1pt of LL3] {\Blue{$\scriptstyle\leq 3$}};
      \draw[thin] (LL1) -- (LL3) ;      
      \draw[thin] (LL3) -- (LL2) ;      
      \node at (7.5,1) {$\L_2:$};
      \node[roundnode] at (8.5,2)   (LLL1) {$1$};
      \node[roundnode] at (8.5,1)   (LLL2) {$3$};
      \node[roundnode] at (8.5,0)   (LLL3) {$2$};
      \node[right = 1pt of LLL1] {\Blue{$\scriptstyle\leq 4$}};
      \node[right = 1pt of LLL2] {\Blue{$\scriptstyle\leq 2$}};
      \node[right = 1pt of LLL3] {\Red{\xcancel{\Blue{$\scriptstyle\leq 3$}} $\scriptstyle\leq 2$}};
      \draw[thin] (LLL1) -- (LLL2) ;      
      \draw[thin] (LLL2) -- (LLL3) ;      
    \end{tikzpicture}
  \end{center}
  \caption{\label{fig:slide3} Terminating the example, $\rhomax{\L}$ is computed (in red).}
\end{figure}

\begin{remark}
  If we remove the hypothesis that $(\P,\rho)$ is well-labelled in Lemma~\ref{lem:flag_decomposition}, then the statement is  wrong and cannot be fixed easily. For example looking at Figure~\ref{fig:badlabel}, $(\P'',\rho)$ is not well-labelled as $2,3$ are incomparable in $\P''$, but $2<3$ and $\rho(2)=2<3=\rho(3)$. It is true that
  $$\fund_{(\P'',\rho)} = \fund_{(\L''_1,\rhomax{\L''_1})} +  \fund_{(\L''_2,\rhomax{\L''_2})},$$
  but $(\L''_1,\rhomax{\L''_1})$ is not well labelled and in fact $ \fund_{(\L''_1,\rhomax{\L''_1})}$ is not a single slide polynomial. The slide polynomial $\fund_{(\L''_2,\rhomax{\L''_2})}=\fund_{(1,1,0,1)}$ will cancel a term in $ \fund_{(\L''_1,\rhomax{\L''_1})}$ to give the correct expansion 
  $$\fund_{(\P'',\rho)}=\fund_{(\P_2,\rho)}=\fund_{(0,1,1,1)}+\fund_{(0,2,0,1)}$$
  as computed with the well-labelled flagged pair $(\P_2,\rho)$ decomposing as $(\L_1,\rho)$ and $(\L_2,\rho)$ in Figure~\ref{fig:slide3}.
\end{remark}

\begin{figure}[ht]
  \begin{center}
    \begin{tikzpicture}[
        roundnode/.style={circle, draw=black, thick, minimum size=1ex},
        label/.style={%
          postaction={ decorate,transform shape,
            decoration={ markings, mark=at position .5 with \node #1;}}}]
      \node at (-0.5,1) {$\P'':$};
      \node[roundnode] at (1,2)   (PP1) {$1$};
      \node[roundnode] at (0,0)   (PP2) {$2$};
      \node[roundnode] at (2,0)   (PP3) {$3$};
      \node[right = 1pt of PP1] {\Blue{$\scriptstyle\leq 4$}};
      \node[right = 1pt of PP2] {\Blue{$\scriptstyle\leq 2$}};
      \node[right = 1pt of PP3] {\Blue{$\scriptstyle\leq 3$}};
      \draw[thin] (PP1) -- (PP3) ;      
      \draw[thin] (PP1) -- (PP2) ;      
      \node at (4,1) {$\L''_1:$};
      \node[roundnode] at (5,2)   (LL1) {$1$};
      \node[roundnode] at (5,0)   (LL2) {$2$};
      \node[roundnode] at (5,1)   (LL3) {$3$};
      \node[right = 1pt of LL1] {\Blue{$\scriptstyle\leq 4$}};
      \node[right = 1pt of LL2] {\Blue{$\scriptstyle\leq 2$}};
      \node[right = 1pt of LL3] {\Blue{$\scriptstyle\leq 3$}};
      \draw[thin] (LL1) -- (LL3) ;      
      \draw[thin] (LL3) -- (LL2) ;      
      \node at (7.5,1) {$\L''_2:$};
      \node[roundnode] at (8.5,2)   (LLL1) {$1$};
      \node[roundnode] at (8.5,1)   (LLL2) {$2$};
      \node[roundnode] at (8.5,0)   (LLL3) {$3$};
      \node[right = 1pt of LLL1] {\Blue{$\scriptstyle\leq 4$}};
      \node[right = 1pt of LLL2] {\Blue{$\scriptstyle\leq 2$}};
      \node[right = 1pt of LLL3] {\Red{\xcancel{\Blue{$\scriptstyle\leq 3$}} $\scriptstyle\leq 1$}};
      \draw[thin] (LLL1) -- (LLL2) ;      
      \draw[thin] (LLL2) -- (LLL3) ;      
    \end{tikzpicture}
  \end{center}
  \caption{\label{fig:badlabel} An example where $\P''$ is not well-labelled with respect to $\rho$ and Lemma~\ref{lem:flag_decomposition} fails.}
\end{figure}

In particular, when $\rho$ is a $\P$-flag, we have the following analog of Corollary~\ref{cor:fundamental} that gives a combinatorial formula for the nonnegative slide expansion.

\begin{corollary}\label{cor:fund}
  Given a poset $\P$ on $[p]$ and a (well labelled) $\P$-flag $\rho$, we have
  \begin{equation}\label{e:fund-fund}
    \fund_{(\P,\rho)} = \sum_{\L \in \mathscr{L}(\P)} \fund_{(\L,\rho)} = \sum_{\L \in \mathscr{L}(\P)} \fund_{\red(\L,\rho)}.
  \end{equation}  
\end{corollary}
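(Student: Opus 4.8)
The plan is to combine the Fundamental Theorem for $(\P,\rho)$-partitions (Theorem~\ref{lem:restricted-fundamental}) with the Main Theorem~\ref{thm:main} and its proof. First I would invoke Corollary~\ref{cor:ext}, which gives the decomposition $\fund_{(\P,\rho)} = \sum_{\L \in \mathscr{L}(\P)} \fund_{(\L,\rho)}$ for any poset $\P$ and any restriction $\rho$; this is the first equality in \eqref{e:fund-fund} and requires no hypothesis on $\rho$ at all. The content of the corollary is therefore entirely in the second equality, namely that each term $\fund_{(\L,\rho)}$ equals the single slide polynomial $\fund_{\red(\L,\rho)}$ when $\rho$ is a $\P$-flag.

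The key step is to verify that, when $\rho$ is a $\P$-flag and $\L$ is a linear extension of $\P$, the restriction of $\rho$ to $\L$ makes $(\L,\rho)$ satisfy the hypotheses of Proposition~\ref{prop:slide}; equivalently, I want $\rhomax{\L}$ to be constant on each descent-free segment of $\L$. As observed in the proof of the Main Theorem, if we first pass through Proposition~\ref{prop:well} to replace $(\P,\rho)$ by a well-labelled and flagged $(\P',\rho')$ with the same generating polynomial, then Lemma~\ref{lem:flag_decomposition} shows that every linear extension $\L$ arising in the recursive decomposition \eqref{eq:Pbasic} gives a well-labelled pair $(\L,\rho)$. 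For such a pair, $\rho$ is weakly decreasing on descent-free segments of $\L$, so $\rhomax{\L}$ is constant there, and Proposition~\ref{prop:slide} (together with Lemma~\ref{lem:well-labelledL}) yields $\fund_{(\L,\rho)} = \fund_{\red(\L,\rho)}$, a single slide polynomial (or zero, when some value of $\rhomax{\L}$ is nonpositive, in which case the term simply does not contribute).

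I would then assemble these facts: $\fund_{(\P,\rho)} = \fund_{(\P',\rho')} = \sum_{\L \in \mathscr{L}(\P')} \fund_{(\L,\rho')} = \sum_{\L \in \mathscr{L}(\P')} \fund_{\red(\L,\rho')}$, where the middle equality is Corollary~\ref{cor:ext} applied to $(\P',\rho')$ and the last uses the well-labelledness guaranteed by Lemma~\ref{lem:flag_decomposition}. Finally, since $\pi$ from Proposition~\ref{prop:well} relabels the nodes without changing the underlying poset structure, the linear extensions of $\P'$ correspond bijectively to those of $\P$, and one checks that $\red(\L,\rho)$ is unchanged under this relabelling (it depends only on $\rhomax{\L}$ and the descent-chain sizes, both of which are preserved), so the sum may be re-indexed over $\mathscr{L}(\P)$ as stated.

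The main obstacle — really the only subtle point — is the bookkeeping in the last paragraph: one must be careful that the reduced weak descent composition $\red(\L,\rho)$ is genuinely an invariant of the abstract pair (labelled poset, restriction) and survives the relabelling by $\pi^{-1}$, and that the linear extensions indexing the two sums match up correctly. Everything else is a direct citation of results already established: Corollary~\ref{cor:ext} for the first equality, and the proof of Theorem~\ref{thm:main} (via Proposition~\ref{prop:well}, Lemma~\ref{lem:flag_decomposition}, Proposition~\ref{prop:slide}, and Lemma~\ref{lem:well-labelledL}) for the identification of each summand with $\fund_{\red(\L,\rho)}$.
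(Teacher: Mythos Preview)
Your argument has a genuine gap, and it stems from treating the parenthetical ``(well labelled)'' as optional rather than as part of the hypothesis. The second equality in \eqref{e:fund-fund} is \emph{false} for a $\P$-flag that is not well-labelled, so no amount of bookkeeping will rescue the re-indexing step. Take $\P$ the antichain on $\{1,2\}$ with $\rho(1)=2$, $\rho(2)=3$: this $\rho$ is vacuously a $\P$-flag but not well-labelled (since $1<2$ are incomparable yet $\rho(1)<\rho(2)$). For the linear extension $\L_1=(1\prec 2)$ one has $\rhomax{\L_1}=(2,3)$, which is not constant on the single ascending run, and indeed $\fund_{(\L_1,\rho)}=x_1^2+x_1x_2+x_1x_3+x_2^2+x_2x_3$ while $\fund_{\red(\L_1,\rho)}=\fund_{(0,2)}=x_1^2+x_1x_2+x_2^2$. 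Summing over both linear extensions, $\sum_{\L}\fund_{\red(\L,\rho)}$ misses the $x_1x_3$ and $x_2x_3$ terms of $\fund_{(\P,\rho)}$.

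Your specific claim that $\red(\L,\rho)$ survives the relabelling by $\pi^{-1}$ is therefore wrong: descents of $\L$ depend on the labels, and the permutation $\pi$ of Proposition~\ref{prop:well} only preserves the descent/ascent pattern on covers of $\P$, not on the new covers introduced when passing to a linear extension. In the example above $\pi$ swaps $1$ and $2$, and $\L_1=(1\prec 2)$ (no descent) corresponds under $\pi^{-1}$ to $(2\prec 1)$ (a descent), so the descent-chain sizes are not preserved. The sums $\sum_{\L\in\mathscr{L}(\P)}\fund_{(\L,\rho)}$ and $\sum_{\L'\in\mathscr{L}(\P')}\fund_{(\L',\rho')}$ agree only globally (both equal $\fund_{(\P,\rho)}$), not term by term.

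Once you take well-labelled as a hypothesis, the detour through Proposition~\ref{prop:well} is unnecessary and the proof is the direct one you sketched in your second paragraph: the first equality is Corollary~\ref{cor:ext}; for the second, iterating Lemma~\ref{lem:flag_decomposition} shows each $(\L,\rho)$ with $\L\in\mathscr{L}(\P)$ is well-labelled, whence $\rhomax{\L}$ is constant on ascending runs and Proposition~\ref{prop:slide} gives $\fund_{(\L,\rho)}=\fund_{\red(\L,\rho)}$. This is exactly the paper's (implicit) argument.
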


\subsection{Two applications}

We now give two important examples of positive expansions in term of slide polynomials that are analogues of Corollary~\ref{cor:shuffle} and Corollary~\ref{cor:syt}, respectively.

\subsubsection{Product of slide polynomials}
In Corollary~\ref{cor:shuffle}, the theory of $\P$-partitions can be used to determine the structure constants for fundamental quasisymmetric functions in terms of the shuffle product on strong compositions. Assaf and Searles \cite{AS17}(Definition~5.9) generalized the shuffle product on strong compositions to the \emph{slide product} on weak compositions.

\begin{definition}[\cite{AS17}]\label{def:slide}
    Let $a,b$ be weak compositions of length $n$. Let $A$ and $B$ be words defined by $A = (2n-1)^{a_1} \cdots (3)^{a_{n-1}} (1)^{a_n}$ and $B = (2n)^{b_1} \cdots (4)^{b_{n-1}} (2)^{b_n}$. Define the \emph{shuffle set of $a$ and $b$}, denoted by $\mathrm{Sh}(a,b)$, by
  \begin{equation}
    \mathrm{Sh}(a,b) = \{ C \in A \shuffle B \mid \Des_A(C) \geq a \mbox{ and } \Des_B(B) \geq b \},
  \end{equation}
  where $\Des_A(C)_i$ (respectively $\Des_B(C)_i$) is the number of letters from $A$ (respectively $B$) in the $i$th increasing run of $C$.

  Define the \emph{slide product of $a$ and $b$}, denoted by $a \shuffle b$, to be the formal sum
  \begin{equation}
    a \shuffle b =  \sum_{C \in \mathrm{Sh}(a,b)} \Des(\mathrm{bump}_{(a,b)}(C))
  \end{equation}
  where $\mathrm{bump}_{(a,b)}(C)$ is the unique element of the shuffle set $0^{n-\ell(\Des(C))} \shuffle C$ such that $\Des_A(\mathrm{bump}_{(a,b)}(C)) \geq a$ and $\Des_B(\mathrm{bump}_{(a,b)}(C)) \geq b$ and if $D \in 0^{n-\ell} \shuffle C$ satisfies $\Des_A(D) \geq a$ and $\Des_B(D) \geq b$, then $\Des(D) \geq \Des(\mathrm{bump}_{(a,b)}(C))$. 
\end{definition}

Assaf and Searles then used the slide product to characterize the structure constants for fundamental slide polynomials \cite[Theorem~5.11]{AS17} as follows.

\begin{theorem}[\cite{AS17}]\label{thm:slide}
  Let $a, b$ be two weak compositions. Then
  \begin{equation}\label{e:slide}
    \fund_{a} \fund_{b} = \sum_{c} [c \mid a \shuffle b] \fund_{c},
  \end{equation}
  where $[c \mid a \shuffle b]$ denotes the coefficient of $c$ in the \emph{slide product} $a \shuffle b$.
\end{theorem}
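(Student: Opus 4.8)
The plan is to realize the product $\fund_a \fund_b$ as the generating polynomial of a flagged $(\P,\rho)$-partition for a suitably chosen disjoint-union poset, and then apply Main Theorem~\ref{thm:main} together with the explicit decomposition in Corollary~\ref{cor:fund} to read off the structure constants. First I would invoke Proposition~\ref{prop:slide}: for the weak compositions $a$ and $b$, choose linear orders $A$, $B$ and $A$-flag $\rho_A$, $B$-flag $\rho_B$ with $\fund_{(A,\rho_A)} = \fund_a$ and $\fund_{(B,\rho_B)} = \fund_b$, using the construction in the last paragraph of that proof (so that $a = \red(A,\rho_A)$ and similarly for $b$). The key point is that, just as in Corollary~\ref{cor:shuffle}, the product of two $(\P,\rho)$-partition generating polynomials on disjoint ground sets is the generating polynomial of the disjoint-union poset: writing $\P = A \oplus B$ on $[2n]$ (relabelling so the two chains use disjoint labels in the pattern of Definition~\ref{def:slide}) and letting $\rho$ be the common restriction map agreeing with $\rho_A$ on $A$ and $\rho_B$ on $B$, one checks that $\A_\rho(\P) = \A_{\rho_A}(A) \times \A_{\rho_B}(B)$ as sets of functions, hence $\fund_{(\P,\rho)} = \fund_a \fund_b$.

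Next I would verify that $\rho$ is a $\P$-flag: conditions (i) and (ii) of Definition~\ref{def:P-flag} only involve cover relations, and every cover of $\P = A\oplus B$ is a cover of $A$ or of $B$, so the $\P$-flag property is inherited from the $A$-flag and $B$-flag properties. Then Main Theorem~\ref{thm:main} applies, and more precisely Corollary~\ref{cor:fund} gives
\begin{equation*}
  \fund_a \fund_b = \fund_{(\P,\rho)} = \sum_{\L \in \mathscr{L}(\P)} \fund_{\red(\L,\rho)} .
\end{equation*}
Since $\mathscr{L}(A\oplus B)$ is exactly the shuffle product of the words $A$ and $B$ (as recalled in Corollary~\ref{cor:shuffle}), the sum is over shuffles $C$ of the two letter-patterns, and $\red(\L,\rho)$ is a weak composition whose flattening is $\Des(\L)$ and whose nonzero-part positions are dictated by $\rhomax{\L}$.

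The main obstacle is then purely bookkeeping: one must match $\red(\L,\rho)$, as $\L$ runs over $\mathscr{L}(A\oplus B)$, with the combinatorial data $\Des(\mathrm{bump}_{(a,b)}(C))$ appearing in Definition~\ref{def:slide}. I would argue that the relabelling pattern $A = (2n-1)^{a_1}\cdots(1)^{a_n}$, $B = (2n)^{b_1}\cdots(2)^{b_n}$ is engineered precisely so that a shuffle $C$ has a descent (in the linear-order sense used to break chains) exactly where Assaf--Searles record one, so $\Supp$ of $\red(\L,\rho)$ — i.e., which part-indices are nonzero — is computed by $\rhomax{\L}$ in a way that reproduces the "bump" operation: $\rhomax{\L}(\min_\L C_s)$ equals the index of the $s$th increasing run of $C$ after the minimal left-justification compatible with $\Des_A \geq a$ and $\Des_B \geq b$. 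The condition $b \trianglerighteq a$ built into the slide polynomial definition corresponds exactly to $\comp(f) \trianglerighteq \comp(\rhomax{\L})$ for the $(\L,\rho)$-partitions, and the constraint $C \in \mathrm{Sh}(a,b)$ (that $\Des_A(C)\geq a$, $\Des_B(C)\geq b$) is what guarantees $\rhomax{\L}$ stays positive so the term does not vanish. Once this dictionary between $\red(\L,\rho)$ and $\Des(\mathrm{bump}_{(a,b)}(C))$ is established, collecting terms over $\mathscr{L}(\P)$ yields the coefficient $[c \mid a \shuffle b]$ of each $\fund_c$, completing the proof. I expect the delicate part to be checking that the normalization in $\mathrm{bump}$ — choosing the $\Des$-maximal left-justification — agrees with the intrinsic $\rhomax{\L}$ rather than some other choice; this is where I would spend the most care, possibly by induction on $n$ or by directly comparing the recursive descriptions of both operations.
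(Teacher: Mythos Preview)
Your approach matches the paper's: realize $\fund_a\fund_b$ as the generating polynomial of the flagged disjoint-union poset $\mathcal A\oplus\mathcal B$ and then apply the Main Theorem to expand into slides, exactly as in Corollary~\ref{cor:slide}. Two remarks are worth making.

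First, the paper explicitly passes through Proposition~\ref{prop:well} to arrange that $(\mathcal A\oplus\mathcal B,\rho)$ is \emph{well-labelled}, not merely flagged; this is what guarantees, via Lemma~\ref{lem:well-labelledL}, that each $\fund_{(\L,\rho)}$ in the linear-extension sum is a single slide polynomial $\fund_{\red(\L,\rho)}$. Your verification that $\rho$ is a $\P$-flag suffices for positivity (Theorem~\ref{thm:main}) but not directly for the explicit formula of Corollary~\ref{cor:fund}: condition~(iii) of Definition~\ref{def:well-label} constrains incomparable pairs, and in $\mathcal A\oplus\mathcal B$ every element of $\mathcal A$ is incomparable to every element of $\mathcal B$, so the labeling must be chosen with care. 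This is easily repaired by invoking Proposition~\ref{prop:well}, as the paper does.

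Second, the paper does \emph{not} carry out the bookkeeping in your last paragraph. It simply offers Corollary~\ref{cor:slide} as a cleaner reformulation and asserts that this ``recovers'' Theorem~\ref{thm:slide}, without verifying term-by-term that $\red(\L,\rho)$ coincides with $\Des(\mathrm{bump}_{(a,b)}(C))$ from Definition~\ref{def:slide}. So your instinct that this matching is the delicate step is correct, but it lies outside what the paper itself proves; the paper treats Theorem~\ref{thm:slide} as a cited result and Corollary~\ref{cor:slide} as an independent, more transparent expansion.
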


Here we give a more direct interpretation with a greatly simplified proof using well-labelled, flagged $(\P,\rho)$-partitions. We use Proposition~\ref{prop:well} to prove the disjoint union of the two linear orders is well-labelled and flagged, and then use Lemma~\ref{lem:flag_decomposition} to find the expression and recover Theorem~\ref{thm:slide}.

\begin{corollary}\label{cor:slide}
  Let $a, b$ be two weak compositions. Choose two linear posets $\mathcal{A}$ and $\mathcal{B}$ along with two well-labelled, flagged restriction maps $\alpha$ and $\beta$ such that $\fund_{(\mathcal{A},\alpha)} = \fund_a$ and $\fund_{(\mathcal{B},\beta)} = \fund_b$. Then
  \begin{equation}\label{e:slide2}
    \fund_{a} \fund_{b} = \fund_{(\mathcal{A},\alpha)} \fund_{(\mathcal{B},\beta)} = \fund_{(\mathcal{A}\oplus\mathcal{B},\alpha\oplus\beta)} =\sum_{\L \in \mathscr{L}(\mathcal{A}\oplus\mathcal{B})} \fund_{\red(\L,\alpha\oplus\beta)},
  \end{equation}
  where $\mathcal{A} \oplus \mathcal{B}$ denotes the partial order as the disjoint union of $\mathcal{A}$ and $\mathcal{B}$, i.e. no element of $\mathcal{A}$ is comparable to an element of $\mathcal{B}$. The set $\mathscr{L}(\mathcal{A} \oplus \mathcal{B})$ corresponds to the slide product of $\mathcal{A}$ and $\mathcal{B}$.
\end{corollary}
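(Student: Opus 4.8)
The plan is to reduce the statement to the Main Theorem and its corollaries applied to the poset $\mathcal{A}\oplus\mathcal{B}$. First I would justify the leftmost equalities: by hypothesis $\fund_{(\mathcal{A},\alpha)} = \fund_a$ and $\fund_{(\mathcal{B},\beta)} = \fund_b$, so $\fund_a\fund_b = \fund_{(\mathcal{A},\alpha)}\fund_{(\mathcal{B},\beta)}$ is immediate. The key structural identity is $\fund_{(\mathcal{A},\alpha)}\fund_{(\mathcal{B},\beta)} = \fund_{(\mathcal{A}\oplus\mathcal{B},\alpha\oplus\beta)}$, which follows exactly as in the classical case (Corollary~\ref{cor:shuffle}): an element of $\A_{\alpha\oplus\beta}(\mathcal{A}\oplus\mathcal{B})$ is precisely a pair consisting of an element of $\A_\alpha(\mathcal{A})$ and an element of $\A_\beta(\mathcal{B})$, since $\mathcal{A}$ and $\mathcal{B}$ share no comparabilities and the restriction $\alpha\oplus\beta$ restricts to $\alpha$ on $\mathcal{A}$ and to $\beta$ on $\mathcal{B}$; reindexing the variable sets shows the generating function factors. (One must be slightly careful that $\mathcal{A}$ and $\mathcal{B}$ are posets on disjoint label sets whose union, after relabeling to $[p+q]$, is order-isomorphic to $\mathcal{A}\oplus\mathcal{B}$; this is where the word encoding with odd/even letters in Definition~\ref{def:slide} comes from, but at the level of this corollary only the disjoint-union structure matters.)

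Next I would verify that $\alpha\oplus\beta$ is an $(\mathcal{A}\oplus\mathcal{B})$-flag. Every cover relation in $\mathcal{A}\oplus\mathcal{B}$ is a cover relation entirely within $\mathcal{A}$ or entirely within $\mathcal{B}$, so conditions (i) and (ii) of Definition~\ref{def:P-flag} for $\alpha\oplus\beta$ reduce to the corresponding conditions for $\alpha$ on $\mathcal{A}$ and for $\beta$ on $\mathcal{B}$, which hold by hypothesis; positivity is likewise inherited. Therefore $(\mathcal{A}\oplus\mathcal{B},\alpha\oplus\beta)$ is a flagged pair, and Corollary~\ref{cor:fund} applies directly to give
\begin{equation*}
  \fund_{(\mathcal{A}\oplus\mathcal{B},\alpha\oplus\beta)} = \sum_{\L\in\mathscr{L}(\mathcal{A}\oplus\mathcal{B})} \fund_{\red(\L,\alpha\oplus\beta)}.
\end{equation*}
This yields the displayed chain of equalities in \eqref{e:slide2}. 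The final sentence, that $\mathscr{L}(\mathcal{A}\oplus\mathcal{B})$ corresponds to the slide product $\mathcal{A}\shuffle\mathcal{B}$, is then a matter of matching the combinatorics of linear extensions of a disjoint union (i.e.\ shuffles of the two chains) together with the reduced weak descent statistic $\red(\L,\alpha\oplus\beta)$ against Definition~\ref{def:slide}: a linear extension of $\mathcal{A}\oplus\mathcal{B}$ is exactly a shuffle $C$ of the word for $\mathcal{A}$ with the word for $\mathcal{B}$, the $\rhomax{}$ values of $\alpha\oplus\beta$ along $\L$ record precisely the bumping $\mathrm{bump}_{(a,b)}$, and $\red(\L,\alpha\oplus\beta) = \Des(\mathrm{bump}_{(a,b)}(C))$.

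The main obstacle is the last identification: one must check that the passage from $\rho$ to $\rhomax{}$ on a linear extension of the disjoint union implements exactly the bumping operation of Definition~\ref{def:slide}, and that the shuffle-set condition $\Des_A(C)\ge a$, $\Des_B(C)\ge b$ is automatically the image of the set $\mathscr{L}(\mathcal{A}\oplus\mathcal{B})$ under this correspondence, with multiplicities matching. This is essentially a bookkeeping argument comparing two descriptions of the same combinatorial objects, but it requires setting up the explicit realizations of $(\mathcal{A},\alpha)$ and $(\mathcal{B},\beta)$ as in the proof of Proposition~\ref{prop:slide} (with descent chains at prescribed positions) so that the odd/even letter alphabet of Definition~\ref{def:slide} aligns with the labels of $\mathcal{A}\oplus\mathcal{B}$, and then tracking how $\delta(i,x)$ in the definition of $\rhomax{}$ decrements the restriction exactly when a descent of the other word has been interleaved. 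Everything else is a routine transfer of the classical $\P$-partition argument (Corollary~\ref{cor:shuffle}) to the flagged setting, now legitimate because Corollary~\ref{cor:fund} supplies the positive slide expansion.
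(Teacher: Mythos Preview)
Your proposal is correct and follows essentially the same route as the paper. The paper's ``proof'' is the single sentence preceding the corollary: use Proposition~\ref{prop:well} to make the disjoint union well-labelled and flagged, then apply Lemma~\ref{lem:flag_decomposition} (equivalently, Corollary~\ref{cor:fund}) to obtain the slide expansion; your write-up fleshes out exactly these steps, with the additional (and welcome) explicit justification of the factorization $\fund_{(\mathcal{A},\alpha)}\fund_{(\mathcal{B},\beta)} = \fund_{(\mathcal{A}\oplus\mathcal{B},\alpha\oplus\beta)}$ and a sketch of the bijection with the slide product that the paper merely asserts.

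One small remark: you verify that $\alpha\oplus\beta$ is a $(\mathcal{A}\oplus\mathcal{B})$-flag and then invoke Corollary~\ref{cor:fund}, whereas the paper phrases it as invoking Proposition~\ref{prop:well} first to pass to a well-labelled pair. These are the same thing, since Corollary~\ref{cor:fund} (via the proof of Theorem~\ref{thm:main}) already absorbs the relabelling step of Proposition~\ref{prop:well}; but it is worth noting that $\alpha\oplus\beta$ need not itself be well-labelled on $\mathcal{A}\oplus\mathcal{B}$ (condition~(iii) can fail across the two chains), so the relabelling is genuinely doing work behind the scenes.
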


\begin{example}\label{ex:slide}
  Let $a = (0,0,2)$ and $b = (0,2,0)$, and take $(\P,\rho)$ as in Figure~\ref{fig:slide-prod}.
  This gives the following expansion for fundamental slide polynomials as
  \[ \fund_{(0,0,2)} \fund_{(0,2,0)} = \fund_{(0,2,2)} + \fund_{(1,2,1)} + \fund_{(0,3,1)} + \fund_{(1,3,0)} + \fund_{(2,2,0)} + \fund_{(0,4,0)} .\]
  Note that this generalizes Example~\ref{ex:shuffle} for the shuffle product $F_{(2)}F_{(2)}$.
\end{example}

\begin{figure}[ht]
  \begin{center}
    \begin{tikzpicture}[
        roundnode/.style={circle, draw=black, thick, minimum size=1ex},
        label/.style={%
          postaction={ decorate,transform shape,
            decoration={ markings, mark=at position .5 with \node #1;}}}]
      \node[roundnode] at (0,3) (A2) {$2$};
      \node[roundnode] at (0,1) (A1) {$1$};
      \node[roundnode] at (1,2) (A4) {$4$};
      \node[roundnode] at (1,0) (A3) {$3$};
      \node[right = 1pt of A1] {\Blue{$\scriptstyle\leq 3$}};
      \node[right = 1pt of A2] {\Blue{$\scriptstyle\leq 3$}};
      \node[right = 1pt of A3] {\Blue{$\scriptstyle\leq 2$}};
      \node[right = 1pt of A4] {\Blue{$\scriptstyle\leq 2$}};
      \draw (A1) -- (A2) ;
      \draw (A3) -- (A4) ;
      \node[roundnode] at (3,3) (L2) {$2$};
      \node[roundnode] at (3,2) (L1) {$1$};
      \node[roundnode] at (3,1) (L4) {$4$};
      \node[roundnode] at (3,0) (L3) {$3$};
      \node[right = 1pt of L1] {\Blue{$\scriptstyle\leq 3$}};
      \node[right = 1pt of L2] {\Blue{$\scriptstyle\leq 3$}};
      \node[right = 1pt of L3] {\Blue{$\scriptstyle\leq 2$}};
      \node[right = 1pt of L4] {\Blue{$\scriptstyle\leq 2$}};
      \draw (L2) -- (L1) ;
      \draw (L1) -- (L4) ;
      \draw (L4) -- (L3) ;
      \node[roundnode] at (4.5,3) (La2) {$2$};
      \node[roundnode] at (4.5,2) (La4) {$4$};
      \node[roundnode] at (4.5,1) (La1) {$1$};
      \node[roundnode] at (4.5,0) (La3) {$3$};
      \node[right = 1pt of La1] {\Blue{$\scriptstyle\leq \cancel{3}\Red{2}$}};
      \node[right = 1pt of La2] {\Blue{$\scriptstyle\leq 3$}};
      \node[right = 1pt of La3] {\Blue{$\scriptstyle\leq \cancel{2}\Red{1}$}};
      \node[right = 1pt of La4] {\Blue{$\scriptstyle\leq 2$}};
      \draw (La2) -- (La4) ;
      \draw (La4) -- (La1) ;
      \draw (La1) -- (La3) ;
      \node[roundnode] at (6,3) (Lb2) {$2$};
      \node[roundnode] at (6,2) (Lb4) {$4$};
      \node[roundnode] at (6,1) (Lb3) {$3$};
      \node[roundnode] at (6,0) (Lb1) {$1$};
      \node[right = 1pt of Lb1] {\Blue{$\scriptstyle\leq \cancel{3}\Red{2}$}};
      \node[right = 1pt of Lb2] {\Blue{$\scriptstyle\leq 3$}};
      \node[right = 1pt of Lb3] {\Blue{$\scriptstyle\leq 2$}};
      \node[right = 1pt of Lb4] {\Blue{$\scriptstyle\leq 2$}};
      \draw (Lb2) -- (Lb4) ;
      \draw (Lb4) -- (Lb3) ;
      \draw (Lb3) -- (Lb1) ;
      \node[roundnode] at (7.5,3) (Lc4) {$4$};
      \node[roundnode] at (7.5,2) (Lc2) {$2$};
      \node[roundnode] at (7.5,1) (Lc1) {$1$};
      \node[roundnode] at (7.5,0) (Lc3) {$3$};
      \node[right = 1pt of Lc1] {\Blue{$\scriptstyle\leq \cancel{3}\Red{2}$}};
      \node[right = 1pt of Lc2] {\Blue{$\scriptstyle\leq \cancel{3}\Red{2}$}};
      \node[right = 1pt of Lc3] {\Blue{$\scriptstyle\leq \cancel{2}\Red{1}$}};
      \node[right = 1pt of Lc4] {\Blue{$\scriptstyle\leq 2$}};
      \draw (Lc4) -- (Lc2) ;
      \draw (Lc2) -- (Lc1) ;
      \draw (Lc1) -- (Lc3) ;
      \node[roundnode] at (9,3) (Ld4) {$4$};
      \node[roundnode] at (9,2) (Ld2) {$2$};
      \node[roundnode] at (9,1) (Ld3) {$3$};
      \node[roundnode] at (9,0) (Ld1) {$1$};
      \node[right = 1pt of Ld1] {\Blue{$\scriptstyle\leq \cancel{3}\Red{1}$}};
      \node[right = 1pt of Ld2] {\Blue{$\scriptstyle\leq \cancel{3}\Red{2}$}};
      \node[right = 1pt of Ld3] {\Blue{$\scriptstyle\leq \cancel{2}\Red{1}$}};
      \node[right = 1pt of Ld4] {\Blue{$\scriptstyle\leq 2$}};
      \draw (Ld4) -- (Ld2) ;
      \draw (Ld2) -- (Ld3) ;
      \draw (Ld3) -- (Ld1) ;
      \node[roundnode] at (10.5,3) (Le4) {$4$};
      \node[roundnode] at (10.5,2) (Le3) {$3$};
      \node[roundnode] at (10.5,1) (Le2) {$2$};
      \node[roundnode] at (10.5,0) (Le1) {$1$};
      \node[right = 1pt of Le1] {\Blue{$\scriptstyle\leq \cancel{3}\Red{2}$}};
      \node[right = 1pt of Le2] {\Blue{$\scriptstyle\leq \cancel{3}\Red{2}$}};
      \node[right = 1pt of Le3] {\Blue{$\scriptstyle\leq 2$}};
      \node[right = 1pt of Le4] {\Blue{$\scriptstyle\leq 2$}};
      \draw (Le4) -- (Le3) ;
      \draw (Le3) -- (Le2) ;
      \draw (Le2) -- (Le1) ;
    \end{tikzpicture}
  \end{center}
  \caption{\label{fig:slide-prod} The well-labelled flagged $(\P,\rho)$ for computing the slide product $\fund_{(0,0,2)} \fund_{(0,2,0)}$. We give here the six linear extensions together with $\rhomax{\L}$ for each.}
\end{figure}

\subsubsection{Flagged Schur polynomials}
Also in \cite[Theorem~3.13]{AS17}, the authors show the positive expansion of Schubert polynomials in terms of slide polynomial. Lascoux and Sch\"{u}tzenberger~\cite{LS82} show that when a permutation is vexillary, the Schubert polynomial is a flagged Schur polynomials (see also~\cite{wachs82}). This shows that many flagged Schur polynomials are slide positive. We claim that all flagged Schur polynomials are slide positive, giving the restricted analogue to Corollary~\ref{cor:syt}. Let us recall what are flagged Schur polynomials.

Given a partition $\lambda$ and a \emph{flag} ${\bf b}=b_1\le b_2\le \ldots b_{\ell_{\lambda}}$, the \emph{flagged Schur function} $s_{\lambda,{\bf b}}$ is the sum of the monomials corresponding to semistandard tableau of shape $\lambda$ and entries in row $i$ bounded by $b_i$. As in the Schur case, we may express this in terms of $(\P,\rho)$-partitions as
\[ \fund_{(\P_\lambda,\rho_{\bf b})} = s_{\lambda,{\bf b}} , \]
where $\P_\lambda$ is the poset constructed as in Fig.~\ref{fig:partition} and $\rho_{\bf b}$ is the restriction given by $\rho_{\bf b}(x)=b_i$ if $x\in \P$ is in row $i$.
  
For example, $s_{431,(2,4,4)}$ is given by the $(\P,\rho)$-partitions with $\P=\P_\lambda$ and $\rho=\rho_{\bf b}$ as in Figure~\ref{fig:flagged_partition}. This is the flagged Schur polynomial equal to the Schubert polynomial indexed by the vexillary permutation $w=1637245$ (see~\cite{wachs82}).

\begin{figure}[ht]
  \begin{center}
    \begin{tikzpicture}[
        roundnode/.style={circle, draw=black, thick, minimum size=1ex},
        label/.style={%
          postaction={ decorate,transform shape,
            decoration={ markings, mark=at position .5 with \node #1;}}}]
      \node[roundnode] at (3,3) (A5) {$4$};
      \node[roundnode] at (5,3) (A1) {$8$};
      \node[roundnode] at (0,2) (A8) {$1$};
      \node[roundnode] at (2,2) (A6) {$3$};
      \node[roundnode] at (4,2) (A2) {$7$};
      \node[roundnode] at (1,1) (A7) {$2$};
      \node[roundnode] at (3,1) (A3) {$6$};
      \node[roundnode] at (2,0) (A4) {$5$};
      \node[right = 1pt of A8] {\Blue{$\scriptstyle\leq 5$}};
      \node[right = 1pt of A7] {\Blue{$\scriptstyle\leq 4$}};
      \node[right = 1pt of A6] {\Blue{$\scriptstyle\leq 4$}};
      \node[right = 1pt of A5] {\Blue{$\scriptstyle\leq 4$}};
      \node[right = 1pt of A4] {\Blue{$\scriptstyle\leq 2$}};
      \node[right = 1pt of A3] {\Blue{$\scriptstyle\leq 2$}};
      \node[right = 1pt of A2] {\Blue{$\scriptstyle\leq 2$}};
      \node[right = 1pt of A1] {\Blue{$\scriptstyle\leq 2$}};
      \draw (A4) -- (A3) ;
      \draw (A3) -- (A2) ;
      \draw (A2) -- (A1) ;
      \draw (A7) -- (A6) ;
      \draw (A6) -- (A5) ;
      \draw (A4) -- (A7) ;
      \draw (A7) -- (A8) ;
      \draw (A3) -- (A6) ;
      \draw (A2) -- (A5) ;
    \end{tikzpicture}
  \end{center}
  \caption{\label{fig:flagged_partition}The labeled poset corresponding to the partition $(4,3,1)$ with $\rho$ restriction given by the flag $2,4,4$.}
\end{figure}

\begin{theorem}\label{thm:flagschur_in_slide}
  For $\lambda$ a partition and ${\bf b}=b_1\le  b_2\le \cdots \le b_{\ell(\lambda)}$ a flag of integers, we have
  \begin{equation}\label{e:flaggedSchur_slide}
    s_{\lambda,{\bf b}} = \sum_{T \in \mathrm{SYT}(\lambda)} \fund_{(\L_T,\rho_{\bf b})} = \sum_{T \in \mathrm{SYT}(\lambda)} \fund_{\red((\L_T,\rho_{\bf b}))} ,
  \end{equation}
  where a standard tableau $T$ determines a unique linear extension $\L_T$ of $\P_\lambda$. 
\end{theorem}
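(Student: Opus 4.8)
The plan is to recognize $s_{\lambda,{\bf b}}$ as the generating function of flagged $(\P_\lambda,\rho_{\bf b})$-partitions and then quote Corollary~\ref{cor:fund}. As indicated just before the theorem, one unwinds Definition~\ref{def:restricted-partition} for the labelled poset $\P_\lambda$ of Fig.~\ref{fig:partition}, whose elements are the cells of $\lambda$: the within-row covers are labelled so the left cell gets the smaller integer, so such a cover $i\precdot j$ has $i<j$ and only forces $f(i)\le f(j)$, while the up-column covers are labelled so the upper cell gets the larger integer, so such a cover $i\precdot j$ has $i>j$ and forces $f(i)<f(j)$. Hence a $(\P_\lambda,\rho_{\bf b})$-partition is exactly a filling of $\lambda$ that is weakly increasing along rows, strictly increasing down columns, and bounded by $b_r$ in row $r$, i.e. a semistandard tableau of shape $\lambda$ flagged by ${\bf b}$; since $x_{f(1)}\cdots x_{f(p)}$ is its weight monomial, $\fund_{(\P_\lambda,\rho_{\bf b})}=s_{\lambda,{\bf b}}$, the flagged analogue of \eqref{e:schur}.

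Next I would check that $\rho_{\bf b}$ is a $\P_\lambda$-flag and that $(\P_\lambda,\rho_{\bf b})$ is well-labelled, so that Corollary~\ref{cor:fund} applies. Condition~(i) of Definition~\ref{def:P-flag} holds because $\rho_{\bf b}$ is constant on rows, so a within-row cover $i\precdot j$ (which has $i<j$) satisfies $\rho_{\bf b}(i)=\rho_{\bf b}(j)$; condition~(ii) concerns an up-column cover $i\precdot j$ with $i>j$, where $i$ sits in some row $r$ and $j$ directly below it in row $r+1$, and then $\rho_{\bf b}(i)=b_r\le b_{r+1}=\rho_{\bf b}(j)$ because ${\bf b}$ is weakly increasing. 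For well-labelledness, conditions~(i),(ii) of Definition~\ref{def:well-label} are weaker than what was just verified, and condition~(iii) holds because in the labelling of $\P_\lambda$ the integers $1,\dots,p$ are assigned proceeding from the last row of $\lambda$ up to the first, so a cell with a smaller label lies in a row of larger index; consequently an incomparable pair lies in distinct rows, and $x<y$ forces $\mathrm{row}(x)>\mathrm{row}(y)$, whence $\rho_{\bf b}(x)=b_{\mathrm{row}(x)}\ge b_{\mathrm{row}(y)}=\rho_{\bf b}(y)$.

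With these checks in place, Corollary~\ref{cor:fund} gives $\fund_{(\P_\lambda,\rho_{\bf b})}=\sum_{\L\in\mathscr{L}(\P_\lambda)}\fund_{(\L,\rho_{\bf b})}=\sum_{\L\in\mathscr{L}(\P_\lambda)}\fund_{\red(\L,\rho_{\bf b})}$, a nonnegative slide expansion. Finally I would invoke the classical identification of $\mathscr{L}(\P_\lambda)$ with $\mathrm{SYT}(\lambda)$ already used in Corollary~\ref{cor:syt}: a standard tableau $T$ records a reading order $1<2<\cdots<p$ of the cells of $\lambda$ that is precisely a linear extension $\L_T$ of $\P_\lambda$, and this assignment is a bijection. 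Re-indexing both sums over $T\in\mathrm{SYT}(\lambda)$ then produces exactly \eqref{e:flaggedSchur_slide}.

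There is no deep obstacle here: all the real content is carried by Theorem~\ref{thm:main} and its Corollary~\ref{cor:fund}, together with the classical poset model for (flagged) Schur functions. The only point that requires genuine care is lining up the labelling convention of $\P_\lambda$ with the natural order on the integers so that $\rho_{\bf b}$ really does satisfy Definition~\ref{def:P-flag}; in particular the monotonicity $b_1\le\cdots\le b_{\ell(\lambda)}$ of the flag is exactly what makes condition~(ii) work, in accordance with the remark after Theorem~\ref{thm:main} that slide-positivity fails when $\rho$ is not a $\P$-flag.
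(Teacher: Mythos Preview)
Your proposal is correct and follows exactly the paper's approach: verify that $(\P_\lambda,\rho_{\bf b})$ is well-labelled and flagged, then invoke Corollary~\ref{cor:fund} and the standard bijection $\mathscr{L}(\P_\lambda)\cong\mathrm{SYT}(\lambda)$. The paper's own proof simply asserts that ``it is clear that the pair $(\P_\lambda,\rho_{\bf b})$ is well-labelled and flagged,'' whereas you spell out the verification of Definitions~\ref{def:P-flag} and~\ref{def:well-label}; your added detail is accurate and welcome.
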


\begin{proof}
  It is clear that the pair $(\P_\lambda,\rho_{\bf b})$ is well-labelled and flagged, and so the formula follows from Corollary~\ref{cor:fund}. 
\end{proof}

Any term $\fund_{\L_T,\rhomax{\L_T}} $ is zero if the first entry of $,\rhomax{\L_T}$ is $<1$ and can be removed from the sum. The remaining terms are all flagged, hence the summation is a sum of slide polynomials. 

\begin{remark}
  Reiner and Shimozono \cite[Theorem~23]{RS95} show that any flagged Schur polynomial is a single key polynomial. More generally, they show that any flagged skew Schur polynomial is a positive sum of key polynomials. More recently,  Assaf and Searles~\cite[Theorem~2.13]{AS18} show that any key polynomial is a positive sum of slide polynomials. Combining those two results yields any flagged skew Schur function expands positively in term of slide polynomials. We offer here a more immediate approach to this result by using flagged $(\P,\rho)$-partitions.
\end{remark}

\begin{example}
  Consider $\lambda=(3,2)$ and ${\bf b}=(2,6)$ (see Figure~\ref{fig:flaggedSchur}). Each standard tableau $T$ gives rise to a linear extension $\L_T$ of $\P_{\lambda}$.
  We then compute $\rhomax{\L_T}$ (in red in the figure). If the first component of $\rhomax{\L_T}$ is $<1$, then we remove this linear order.
  In the Figure, the last linear order has component $0$. Hence it will not contribute to the expansion. The four other linear orders are flagged $(\L_T,\rhomax{\L_T})$-partitions and thus  slide polynomials. We then have
  $$ s_{(3,2),(2,6)} =  \fund_{(0,3,0,0,0,2)} + \fund_{(2,2,0,0,0,1)} + \fund_{(1,3,0,0,0,1)} + \fund_{(2,3)}$$
  as they appear from left to right in Figure~\ref{fig:flaggedSchur}.
\end{example}

\begin{figure}[ht]
  \begin{center}
    \begin{tikzpicture}[
        roundnode/.style={circle, draw=black, thick, minimum size=1ex},
        label/.style={%
          postaction={ decorate,transform shape,
            decoration={ markings, mark=at position .5 with \node #1;}}}]
      \node[roundnode] at (2,2) (A2) {$2$};
      \node[roundnode] at (4,2) (A5) {$5$};
      \node[roundnode] at (1,1) (A1) {$1$};
      \node[roundnode] at (3,1) (A4) {$4$};
      \node[roundnode] at (2,0) (A3) {$3$};
      \node[right = 1pt of A1] {\Blue{$\scriptstyle\leq 6$}};
      \node[right = 1pt of A2] {\Blue{$\scriptstyle\leq 6$}};
      \node[right = 1pt of A3] {\Blue{$\scriptstyle\leq 2$}};
      \node[right = 1pt of A4] {\Blue{$\scriptstyle\leq 2$}};
      \node[right = 1pt of A5] {\Blue{$\scriptstyle\leq 2$}};
      \draw (A1) -- (A2) ;
      \draw (A3) -- (A4) ;
      \draw (A4) -- (A5) ;
      \draw (A3) -- (A1) ;
      \draw (A4) -- (A2) ;
      \node[roundnode] at (5.5,3) (L2) {$2$};
      \node[roundnode] at (5.5,1) (L5) {$5$};
      \node[roundnode] at (5.5,2) (L1) {$1$};
      \node[roundnode] at (5.5,0) (L4) {$4$};
      \node[roundnode] at (5.5,-1) (L3) {$3$};
      \node[right = 1pt of L1] {\Blue{$\scriptstyle\leq 6$}};
      \node[right = 1pt of L2] {\Blue{$\scriptstyle\leq 6$}};
      \node[right = 1pt of L3] {\Blue{$\scriptstyle\leq 2$}};
      \node[right = 1pt of L4] {\Blue{$\scriptstyle\leq 2$}};
      \node[right = 1pt of L5] {\Blue{$\scriptstyle\leq 2$}};
      \draw (L3) -- (L4) ;
      \draw (L4) -- (L5) ;
      \draw (L5) -- (L1) ;
      \draw (L1) -- (L2) ;
      \node[roundnode] at (7,3) (La2) {$2$};
      \node[roundnode] at (7,2) (La5) {$5$};
      \node[roundnode] at (7,1) (La1) {$1$};
      \node[roundnode] at (7,0) (La4) {$4$};
      \node[roundnode] at (7,-1) (La3) {$3$};
      \node[right = 1pt of La1] {\Blue{$\scriptstyle\leq \cancel{6}\Red{2}$}};
      \node[right = 1pt of La2] {\Blue{$\scriptstyle\leq 6$}};
      \node[right = 1pt of La3] {\Blue{$\scriptstyle\leq \cancel{2}\Red{1}$}};
      \node[right = 1pt of La4] {\Blue{$\scriptstyle\leq \cancel{2}\Red{1}$}};
      \node[right = 1pt of La5] {\Blue{$\scriptstyle\leq 2$}};
      \draw (La3) -- (La4) ;
      \draw (La4) -- (La1) ;
      \draw (La1) -- (La5) ;
      \draw (La5) -- (La2) ;
     \node[roundnode] at (8.5,3) (Lb2) {$2$};
      \node[roundnode] at (8.5,2) (Lb5) {$5$};
      \node[roundnode] at (8.5,0) (Lb1) {$1$};
      \node[roundnode] at (8.5,1) (Lb4) {$4$};
      \node[roundnode] at (8.5,-1) (Lb3) {$3$};
      \node[right = 1pt of Lb1] {\Blue{$\scriptstyle\leq \cancel{6}\Red{2}$}};
      \node[right = 1pt of Lb2] {\Blue{$\scriptstyle\leq 6$}};
      \node[right = 1pt of Lb3] {\Blue{$\scriptstyle\leq \cancel{2}\Red{1}$}};
      \node[right = 1pt of Lb4] {\Blue{$\scriptstyle\leq 2$}};
      \node[right = 1pt of Lb5] {\Blue{$\scriptstyle\leq 2$}};
      \draw (Lb3) -- (Lb1) ;
      \draw (Lb1) -- (Lb4) ;
      \draw (Lb4) -- (Lb5) ;
      \draw (Lb5) -- (Lb2) ;
     \node[roundnode] at (10,2) (Lc2) {$2$};
      \node[roundnode] at (10,3) (Lc5) {$5$};
      \node[roundnode] at (10,1) (Lc1) {$1$};
      \node[roundnode] at (10,0) (Lc4) {$4$};
      \node[roundnode] at (10,-1) (Lc3) {$3$};
      \node[right = 1pt of Lc1] {\Blue{$\scriptstyle\leq \cancel{6}\Red{2}$}};
      \node[right = 1pt of Lc2] {\Blue{$\scriptstyle\leq \cancel{6}\Red{2}$}};
      \node[right = 1pt of Lc3] {\Blue{$\scriptstyle\leq \cancel{2}\Red{1}$}};
      \node[right = 1pt of Lc4] {\Blue{$\scriptstyle\leq \cancel{2}\Red{1}$}};
      \node[right = 1pt of Lc5] {\Blue{$\scriptstyle\leq 2$}};
      \draw (Lc3) -- (Lc4) ;
      \draw (Lc4) -- (Lc1) ;
      \draw (Lc1) -- (Lc2) ;
      \draw (Lc2) -- (Lc5) ;
     \node[roundnode] at (11.5,2) (Ld2) {$2$};
      \node[roundnode] at (11.5,3) (Ld5) {$5$};
      \node[roundnode] at (11.5,0) (Ld1) {$1$};
      \node[roundnode] at (11.5,1) (Ld4) {$4$};
      \node[roundnode] at (11.5,-1) (Ld3) {$3$};
      \node[right = 1pt of Ld1] {\Blue{$\scriptstyle\leq \cancel{6}\Red{1}$}};
      \node[right = 1pt of Ld2] {\Blue{$\scriptstyle\leq \cancel{6}\Red{2}$}};
      \node[right = 1pt of Ld3] {\Blue{$\scriptstyle\leq \cancel{2}\Red{0}$}};
      \node[right = 1pt of Ld4] {\Blue{$\scriptstyle\leq \cancel{2}\Red{1}$}};
      \node[right = 1pt of Ld5] {\Blue{$\scriptstyle\leq 2$}};
      \draw (Ld3) -- (Ld1) ;
      \draw (Ld1) -- (Ld4) ;
      \draw (Ld4) -- (Ld2) ;
      \draw (Ld2) -- (Ld5) ;
    \end{tikzpicture}
  \end{center}
  \caption{\label{fig:flaggedSchur} The well-labelled flagged $(\P,\rho)$ partitions for  $\lambda=(3,2)$ and $\rho$ restriction given by the flag $1,6$. We give here the five linear extension together with $\rhomax{\L_T}$ for each standard tableau.}
\end{figure}

%
%

\bibliographystyle{amsalpha} 
\bibliography{P-partitions}

\end{document}